\newtheorem{thm}{Theorem}[section]
\newtheorem{cor}[thm]{Corollary}
\newtheorem{lem}[thm]{Lemma}
\newtheorem{defi}[thm]{Definition}
\newtheorem{prop}[thm]{Proposition}
\theoremstyle{definition}
\newtheorem{ex}[thm]{Example}
\newcommand{\NN}{\mathbb{N}}
\newcommand{\RR}{\mathbb{R}}
\DeclareMathOperator{\Cech}{Cech}
\DeclareMathOperator{\rank}{rank}
\newcommand{\Ball}{\overline B}
\DeclareMathOperator{\diam}{diam}
\DeclareMathOperator{\bd}{bd}
\DeclareMathOperator{\comp}{comp}
\DeclareMathOperator{\conn}{conn}
\DeclareMathOperator{\sep}{sep}
\DeclareMathOperator{\Count}{Count}
\DeclareMathOperator{\SubsetCount}{RelativeCount}
\begin{document}

\title{Persistent Betti numbers of random \v Cech complexes}

\author[U. Bauer]{Ulrich Bauer} 
\address{Technical University of Munich\\ Zentrum Mathematik (M10)\\ Boltzmannstr. 3 \\ 85748 Garching \\ Germany} 

\author[F. Pausinger]{Florian Pausinger} 
\address{School of Mathematics \& Physics, Queen's University Belfast, BT7 1NN, Belfast, United Kingdom.}

\date{}

\begin{abstract}
We study the persistent homology of random \v Cech complexes. 
Generalizing a method of Penrose for studying random geometric graphs, we first describe an appropriate theoretical framework in which we can state and address our main questions. Then we define the $k$th \emph{persistent} Betti number of a random \v Cech complex and determine its asymptotic order in the subcritical regime. This extends a result of Kahle on the asymptotic order of the ordinary $k$th Betti number of such complexes to the persistent setting.

\end{abstract}

\maketitle

\section{Introduction}
\label{sec1}

\subsection{Motivation}
In this paper we study the persistent homology of random geometric complexes, which are simplicial complexes built on the points of a Poisson point process in $d$-dimensional Euclidean space.
In particular, we focus on  \v Cech complexes, which are homotopy equivalent to a union of balls of the same radius centered at the points; however, analogous results can also be obtained for Vietoris--Rips complexes using essentialy the same arguments.
Our work is motivated by recent results on the topology of random geometric complexes (see \cite{KahleSurvey} for a survey) as well as topological data analysis, an emerging field that received increasing attention over the last years. In this context, point cloud data is analyzed with methods from persistent homology. The goal of a statistical interpretation of the results requires a probabilistic null hypothesis to compare the results to.
Thus, it is of utmost importance to understand the persistent homology of randomly sampled point sets.
We provide results in this direction, drawing motivation for our investigations on results of Kahle \cite{Kahle2011Random} on the homology of geometric complexes built from random point sets. In that paper, Kahle determined the expected Betti numbers of random \v Cech and Vietoris--Rips complexes asymptotically (as the number of points, $n$, goes to $\infty$), and identified four different regimes for the radius parameter $r_n$, in dependence of $n$, with qualitatively different behavior.

Our main contribution is twofold. First, we describe an appropriate theoretical framework in which we can state and address our main questions, 
reformulating central results of Penrose \cite{Penrose2003Random} about random geometric graphs in a more general setting of \emph{geometric properties}.
In a second step, we extend the results obtained by Kahle for the \emph{subcritical regime}, i.e., for radii $r_n$ with ${r_n}^d n \rightarrow 0$ as $n \rightarrow \infty$. In particular, we determine the asymptotic order of the expected $k$th persistent Betti number of a random \v Cech complex, recovering the original result of Kahle on the ordinary $k$th Betti number of such complexes as a special case.

The persistent homology of random geometric complexes has also received attention in recent work by Bobrowski, Kahle, and Skraba \cite{Bobrowski2016Maximally}, who show the asymptotic order of the expected maximum persistence arising in a filtration of geometric complexes. Our work complements this by showing the expected rank of persistent homology with fixed persistence.

\subsection{Preliminaries}
Let $P=\{x_1, \ldots, x_N \}$ %
be a collection of points in  $\RR^d$ and let $r>0$. The \v{C}ech complex $\Cech_r(P)$ is a simplicial complex defined as
\[
  \Cech_r(P)  =  \bigg\{ Q \subseteq P
                    \mid \bigcap_{x \in Q} \Ball_r (x) \neq \emptyset \bigg\} ,
\]
where $\Ball_r(x)$ denotes the closed ball of radius $r$ centered at $x$.
By the Nerve Theorem, $\Cech_r(P)$ is homotopy-equivalent to the union of balls \[\Ball_r(P)=\bigcup_{x\in P}\Ball_r(x)\] of radius $r$ centered at the points in $P$.
Furthermore, let $\lambda: \RR^d \rightarrow [0,\infty)$ be a bounded measurable function. A \emph{Poisson process} with intensity function $\lambda$ is a point process $\mathcal{P}$ in $\RR^d$ with the property that for a Borel set $A \subseteq \RR^d$, the random variable $\mathcal{P}(A)$ is Poisson distributed with parameter $\int_A \lambda(x) \,dx$ whenever this integral is finite, and if $A_1, \ldots, A_m$ are disjoint Borel subsets of $\RR^d$, then the variables $\mathcal{P}(A_i)$, $1\leq i \leq m$, are mutually independent; see \cite{Kingman1993Poisson, Penrose2003Random}.

In the context of this paper, we are interested in 
a Poisson point process on $\RR^d$ with intensity function $x \mapsto n f(x)$, where $n \in \NN$, $x \in \RR^d$ and $f$ is a bounded probability density function.

One concrete way of constructing such a Poisson point process is given as follows.
Let $f$ be a bounded probability density function on $\RR^d$, and let $x_1, x_2, \ldots$ be independent and identically distributed $d$-dimensional random variables with common density $f$. For given $n > 0$, let $N_{n}$ be a Poisson random variable, independent of $\{x_1, x_2, \ldots \}$, and let
\[
\mathcal{P}_n:= \{x_1, x_2, \ldots, x_{N_{n}}\}.
\]
It is shown in \cite[Proposition 1.5]{Penrose2003Random} that $\mathcal{P}_{n}$ is indeed a Poisson point process on $\RR^d$ with intensity $x \mapsto n f(x)$. We call a \v{C}ech complex that is built from the random set $\mathcal{P}_n$ a \emph{random \v{C}ech complex}.

We note that in our particular setting in Section \ref{sec3} and Section \ref{sec4} working with a Poisson point process $\mathcal{P}_n$ offers some technical advantages over working with a random set $\mathcal{X}_n=\{ x_1, \ldots, x_n\}$ of fixed size, also called \emph{binomial point process}. However, the results in Section \ref{sec2} are most easily obtained for a binomial process $\mathcal{X}_n$. Therefore, we briefly recall what is known as \emph{Palm theory for Poisson processes}; see \cite[Section 1.6 and 1.7]{Penrose2003Random} for a thorough discussion of this issue.

\begin{thm}[\cite{Penrose2003Random}, Theorem 1.6] \label{thm:palm}
Let $n >0$. Suppose $p \in \NN$ and suppose $h(Y,X)$ is a bounded measurable function defined on all pairs of the form $(Y,X)$ with $X$ being a finite subset of $\RR^d$ and $Y$ a subset of $X$, satisfying $h(Y,X)=0$ unless $Y$ has $p$ elements. Then
$$ E \left( \sum_{Y \subseteq \mathcal{P}_n } h(Y,\mathcal{P}_n) \right) = \frac{n^p}{p!} \, E  \left( h(\mathcal{X}_p, \mathcal{X}_p \cup \mathcal{P}_n) \right), $$
where the sum on the left-hand side is over all subsets $Y$ of the Poisson point process $\mathcal{P}_n$, and the binomial point process set $\mathcal{X}_p$ is assumed to be independent of $\mathcal{P}_n$.
\end{thm}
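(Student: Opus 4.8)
The plan is to exploit the explicit construction of $\mathcal{P}_n$ as $\{x_1,\dots,x_{N_n}\}$, where the $x_i$ are i.i.d.\ with common density $f$ and $N_n$ is an independent Poisson variable of mean $\int_{\RR^d} n f(x)\,dx = n$. First I would condition on the event $\{N_n = m\}$. Since $h(Y,X)$ vanishes unless $|Y| = p$, the inner sum runs only over the $\binom{m}{p}$ subsets $Y$ of $\{x_1,\dots,x_m\}$ of cardinality $p$, and by the exchangeability of the i.i.d.\ points $x_1,\dots,x_m$ each such subset contributes the same expectation. Choosing the representative $Y = \mathcal{X}_p = \{x_1,\dots,x_p\}$ then gives
\[
E\!\left(\sum_{Y\subseteq\mathcal{P}_n} h(Y,\mathcal{P}_n)\right)
= \sum_{m=p}^{\infty} P(N_n=m)\binom{m}{p}\,
E\!\left(h(\mathcal{X}_p,\{x_1,\dots,x_m\})\right).
\]

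Next I would simplify the Poisson weights. Using $P(N_n=m)=e^{-n}n^m/m!$ together with $\binom{m}{p}/m! = 1/(p!\,(m-p)!)$, the substitution $j = m-p$ turns the combined weight into
\[
P(N_n=m)\binom{m}{p}
= \frac{n^p}{p!}\cdot e^{-n}\frac{n^{\,j}}{j!}
= \frac{n^p}{p!}\,P(N_n'=j),
\]
where $N_n'$ is a further Poisson variable of mean $n$. The factor $n^p/p!$ thus pulls out of the sum, leaving
\[
\frac{n^p}{p!}\sum_{j=0}^{\infty} P(N_n'=j)\,
E\!\left(h\bigl(\mathcal{X}_p,\ \mathcal{X}_p\cup\{x_{p+1},\dots,x_{p+j}\}\bigr)\right).
\]

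Finally I would reassemble the right-hand side. The points $x_{p+1},x_{p+2},\dots$ are i.i.d.\ with density $f$ and independent of $\mathcal{X}_p$; setting $\mathcal{P}_n':=\{x_{p+1},\dots,x_{p+N_n'}\}$ with $N_n'$ the independent Poisson count above realises, by \cite[Proposition 1.5]{Penrose2003Random}, a Poisson process of intensity $x\mapsto nf(x)$ that is independent of $\mathcal{X}_p$ and has the same law as $\mathcal{P}_n$. Conditioning on $\{N_n'=j\}$ and summing the last display identifies it with $\tfrac{n^p}{p!}E(h(\mathcal{X}_p,\mathcal{X}_p\cup\mathcal{P}_n'))$, which is exactly the asserted identity once $\mathcal{P}_n'$ is relabelled as the independent copy $\mathcal{P}_n$ appearing on the right.

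The Poisson bookkeeping and the reindexing $j=m-p$ are routine, and the boundedness of $h$ makes the interchange of expectation with the absolutely convergent series immediate by dominated convergence. I therefore expect the only delicate points to be the exchangeability step — justifying that summing the symmetric functional over all $p$-subsets equals $\binom{m}{p}$ times its value on a single fixed representative — and the clean identification of the independent copy $\mathcal{P}_n'$, so that the two occurrences of $\mathcal{P}_n$ (inside the sum on the left and inside the expectation on the right) are correctly matched both in distribution and in their independence from $\mathcal{X}_p$.
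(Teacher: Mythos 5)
Your proposal is correct. Note that the paper itself gives no proof of this statement; it is quoted verbatim from Penrose's book (Theorem 1.6 of \cite{Penrose2003Random}), and your argument --- conditioning on the Poisson count $N_n$, using exchangeability of the i.i.d.\ points to reduce the sum over $p$-subsets to $\binom{m}{p}$ copies of a single representative, and then reabsorbing the reindexed Poisson weights into an independent copy of $\mathcal{P}_n$ via Proposition 1.5 --- is essentially the standard proof found there, with the interchange of sum and expectation justified by boundedness of $h$ exactly as you indicate.
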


\subsection{Results}
Throughout this paper, we use the Landau symbols to describe the asymptotic growth of functions. In particular, we write $f \in \mathcal{O}(g)$ if there is a constant $C>0$ and an $x_0>0$ such that $|f(x)| \leq C \cdot |g(x)|$ for all $x>x_0$. Similarly, we write $f \in \Omega(g)$ if there is a constant $c>0$ and an $x_0>0$ such that $c \cdot |g(x)| \leq |f(x)|$ for all $x>x_0$. And finally, we write $f \in \Theta(g)$ if there are constants $C>c>0$  and $x_0>0$ such that $ c \cdot |g(x)| \leq |f(x)| \leq C\cdot |g(x)|$ for all $x>x_0$.

We generalize the results of Kahle \cite{Kahle2011Random} in the \emph{subcritical regime}, i.e., for radii $r_n$ satisfying ${r_n}^d n \rightarrow 0$ as $n \rightarrow \infty$.
We fix a parameter $\vartheta \geq 1$ and define the \emph{$k$th $\vartheta$-persistent Betti number} of the inclusion map $\Cech_r(P) \hookrightarrow \Cech_{\vartheta r}(P)$ for a point set $P \subset \RR^d$ as
\[
\beta_k^{\vartheta}(P,r) := \rank H_k( \Cech_r(P) \hookrightarrow \Cech_{\vartheta r}(P) ),
\]
where $H_k$ denotes the $k$th homology with coefficients in a fixed field $\mathbb K$.
Consequently, setting $\vartheta=1$, we obtain the usual Betti numbers
\[
\beta_k(P,r) := \rank H_k( \Cech_r(P) ).
\]
We briefly discuss the related case for integer coefficients in the final section.
Our main result provides asymptotic bounds for $\beta_k^{\vartheta}(\mathcal{P}_n,r_n)$ for radii $r_n$ in the subcritical regime.
\begin{thm} \label{thm1}
Let $\mathcal{P}_n$ be as defined above and let $\vartheta \geq 1$, $d \geq 2$ and $1 \leq k \leq d-1$. Let ${r_n}^d n \rightarrow 0$ for $n \rightarrow \infty$. The expected $k$th $\vartheta$-persistent Betti number of the inclusion  $\Cech_{r_n}(\mathcal{P}_n) \hookrightarrow \Cech_{\vartheta {r_n}}(\mathcal{P}_n)$ satisfies
$$ E(\beta_k^{\vartheta}(\mathcal{P}_n,r_n)) \in \Theta \left(  n  ({r_n}^{d}n)^{(m-1)} \right )$$
as $n \rightarrow \infty$, where $m=m(\vartheta,k)$ is an integer depending on $\vartheta$ and $k$.
\end{thm}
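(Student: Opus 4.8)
The plan is to exploit the fact that in the subcritical regime the complex $\Cech_{\vartheta r_n}(\mathcal{P}_n)$ decomposes with high probability into many small connected components, together with the additivity of persistent homology across components and Palm theory (Theorem~\ref{thm:palm}), so as to reduce the computation to counting \emph{minimal isolated configurations}. The first conceptual step is to pin down the integer $m = m(\vartheta,k)$. I would define $m$ to be the least cardinality of a finite set $S \subset \RR^d$ whose \v Cech filtration carries a nonzero class in the image of $H_k(\Cech_\rho(S)) \to H_k(\Cech_{\vartheta\rho}(S))$ for some $\rho > 0$; equivalently, by the scale invariance of the \v Cech construction, the least number of points supporting a $k$-cycle whose death-to-birth radius ratio exceeds $\vartheta$. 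One must check that $m$ is finite, which follows because arbitrarily large persistence ratios for $k$-cycles are realizable with enough points (cf.\ \cite{Bobrowski2016Maximally}), and that $m \ge k+2$, since a nontrivial $k$-cycle needs at least the $k+2$ vertices of the boundary of a $(k+1)$-simplex; for $\vartheta = 1$ one recovers $m = k+2$ and hence Kahle's exponent.

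For the lower bound I would fix a configuration $S_0$ of exactly $m$ points realizing the minimal persistent cycle with ratio strictly greater than $\vartheta$, and observe that this is an open condition: there is $\varepsilon > 0$ and a scale $\rho$ so that every configuration within Hausdorff distance $\varepsilon\rho$ of $S_0$ still supports a nonzero persistent $k$-class from radius $\rho$ to $\vartheta\rho$. Rescaling so that $\rho = r_n$, I would count the expected number of occurrences in $\mathcal{P}_n$ of such an approximate copy that is moreover \emph{isolated}, meaning no further point of $\mathcal{P}_n$ lies within distance $2\vartheta r_n$ of it; an isolated occurrence forms its own connected component of $\Cech_{\vartheta r_n}(\mathcal{P}_n)$ and hence contributes at least $1$ to $\beta_k^\vartheta$. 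Applying Theorem~\ref{thm:palm} with $p = m$ and $h$ the indicator of ``approximate copy of $S_0$'' and ``isolation'', the expectation equals $\tfrac{n^m}{m!}E(h(\mathcal{X}_m, \mathcal{X}_m \cup \mathcal{P}_n))$; the isolation factor is $\exp(-\mathcal{O}(n r_n^d)) \to 1$ since $r_n^d n \to 0$, and integrating the positions (one point against $f$, the remaining $m-1$ over a region of volume $\Theta(r_n^{d(m-1)})$) gives $\Theta(n^m r_n^{d(m-1)}) = \Theta\big(n(r_n^d n)^{m-1}\big)$.

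The upper bound is where the real work lies. Since each component of $\Cech_{r_n}$ maps into a unique component of $\Cech_{\vartheta r_n}$, the map on $H_k$ splits as a direct sum indexed by the components $D$ of $\Cech_{\vartheta r_n}(\mathcal{P}_n)$, so $\beta_k^\vartheta(\mathcal{P}_n, r_n) = \sum_D \rank\big(H_k(\Cech_{r_n} \text{ on the vertices of } D) \to H_k(D)\big)$. By minimality of $m$, any $D$ with fewer than $m$ vertices contributes $0$, while a $D$ with $j \ge m$ vertices contributes at most $\binom{j}{k+1}$, the number of its $k$-simplices. Taking expectations and grouping by component size, Theorem~\ref{thm:palm} combined with Cayley's bound $j^{j-2}$ on spanning trees and a per-edge volume of $\mathcal{O}(r_n^d)$ yields $E(\#\{D : |D| = j\}) \le n\,(C n r_n^d)^{j-1}$ for a constant $C$ independent of $j$. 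Hence $E(\beta_k^\vartheta) \le \sum_{j \ge m} \binom{j}{k+1}\, n\,(C n r_n^d)^{j-1}$, a series that converges geometrically once $C n r_n^d < 1$ and is dominated by its $j = m$ term, giving $E(\beta_k^\vartheta) \in \mathcal{O}\big(n(r_n^d n)^{m-1}\big)$.

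The main obstacle, I expect, is the combinatorial–topological bookkeeping surrounding $m$: proving that fewer than $m$ points genuinely cannot support a cycle of persistence ratio exceeding $\vartheta$ (so that small components contribute nothing to the upper bound), and dually constructing the robust, isolated minimal configuration needed for the lower bound. Establishing that $m(\vartheta,k)$ is finite and making the two bounds meet at the same exponent $m-1$ is the crux; once the component decomposition and the tree-counting estimate are in place, the summation over component sizes is routine given $r_n^d n \to 0$.
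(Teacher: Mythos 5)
Your proposal is correct and follows essentially the same route as the paper: the same definition and finiteness of $m(\vartheta,k)$, a lower bound by counting isolated, stably perturbed copies of a minimal $m$-point configuration via Palm theory (which the paper packages as \cref{prop:penrose2} applied to the subset property $\Upsilon^{\vartheta,k}_{r,m}$, together with the disjointness argument of \cref{lem:count}), and an upper bound by decomposing over components of $\Cech_{\vartheta r_n}(\mathcal{P}_n)$, noting that components with fewer than $m$ vertices contribute nothing and bounding larger ones combinatorially. The only substantive difference is that you control the tail of the sum over component sizes by an explicit spanning-tree/Cayley estimate yielding a geometric series in $Cnr_n^d$, whereas the paper invokes \cref{prop:penrose2} termwise and asserts dominance of the $p=m$ term; your version makes that uniformity explicit.
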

See \cref{fig:example} for an example with parameters chosen such that the persistent Betti number behaves as $\Theta(1)$.
For $\vartheta=1$, we have $m(1,k)=k+2$, and the above theorem matches a result of Kahle \cite[Theorem 3.2]{Kahle2011Random} about the expected Betti number of \v Cech complexes:
\begin{cor}
The expected $k$th Betti number of a random \v{C}ech complex $\Cech_r(\mathcal{P}_n)$ satisfies
$$ E(\beta_k(\mathcal{P}_n,r_n)) \in \Theta \left( n  ({r_n}^{d}n)^{(k+1)} \right ),$$
as $n \rightarrow \infty$.
\end{cor}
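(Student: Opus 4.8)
The plan is to obtain the corollary as the special case $\vartheta = 1$ of \cref{thm1}. First I would observe that for $\vartheta = 1$ the inclusion $\Cech_r(P) \hookrightarrow \Cech_{\vartheta r}(P)$ degenerates to the identity map $\Cech_r(P) \to \Cech_r(P)$, so the induced map on $H_k$ is the identity and its rank is simply $\rank H_k(\Cech_r(P))$. Thus $\beta_k^1(P,r) = \beta_k(P,r)$, exactly as recorded in the definitions preceding \cref{thm1}, and in particular $E(\beta_k(\mathcal{P}_n, r_n)) = E(\beta_k^1(\mathcal{P}_n, r_n))$.

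With this identification in hand, \cref{thm1} gives $E(\beta_k^1(\mathcal{P}_n, r_n)) \in \Theta \left( n ({r_n}^d n)^{m-1} \right)$ where $m = m(1,k)$, so it only remains to evaluate $m(\vartheta,k)$ at $\vartheta = 1$. I would establish $m(1,k) = k+2$ by appealing to the combinatorial meaning of $m(\vartheta,k)$ as the minimal number of vertices in a configuration that can support a nontrivial class in the image of the map $H_k(\Cech_r \hookrightarrow \Cech_{\vartheta r})$. For $\vartheta = 1$ this reduces to the minimal number of points carrying a nonzero $k$-cycle in a single \v Cech complex; since any nontrivial $k$-cycle must contain the boundary of a $(k+1)$-simplex (an ``empty'' $k$-sphere), and this boundary uses exactly $k+2$ vertices, we obtain $m(1,k) = k+2$.

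Substituting $m = k+2$ into the bound of \cref{thm1} turns the exponent $m-1$ into $k+1$, yielding $E(\beta_k(\mathcal{P}_n, r_n)) \in \Theta \left( n ({r_n}^d n)^{k+1} \right)$, as claimed. The only content beyond a direct invocation of \cref{thm1} is the evaluation $m(1,k) = k+2$, which I expect to be the single (minor) point requiring care; it should follow immediately from whatever explicit description of $m(\vartheta,k)$ is produced in the proof of \cref{thm1}, specialized to the degenerate parameter $\vartheta = 1$, where the minimal persistent configuration coincides with the minimal homologically nontrivial one.
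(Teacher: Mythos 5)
Your proposal matches the paper's own treatment exactly: the corollary is obtained by setting $\vartheta=1$ in \cref{thm1}, noting $\beta_k^1=\beta_k$, and substituting $m(1,k)=k+2$, which the paper likewise simply asserts rather than proves. One small caveat on your justification of $m(1,k)=k+2$: the claim that ``any nontrivial $k$-cycle must contain the boundary of a $(k+1)$-simplex'' is not literally true (a hexagonal $1$-cycle contains no empty triangle); the correct and sufficient observations are that any nonzero simplicial $k$-cycle is supported on at least $k+2$ vertices (with only $k+1$ vertices the sole candidate $k$-simplex has nonzero boundary), giving $m(1,k)\geq k+2$, and that $\partial\Delta^{k+1}$ is realizable as $\Cech_r(P)$ for a set $P$ of $k+2$ points in $\RR^d$ when $1\leq k\leq d-1$ (e.g.\ the vertices of a regular $(k+1)$-simplex with $r$ between the facet circumradius and the simplex circumradius), giving $m(1,k)\leq k+2$.
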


The proof of Theorem \ref{thm1} makes essential use of a method used by Penrose to study random geometric graphs \cite{Penrose2003Random}. We generalize this method to the setting of finite geometric properties in Section \ref{sec2}, and we apply it in Section \ref{sec3} to prove the asymptotic lower bound. In Section \ref{sec4} we show the upper bound to complete the proof of Theorem \ref{thm1}, and we conclude our paper with final remarks and open problems in Section \ref{sec5}.

\begin{figure}
\begin{center}
\includegraphics[width=.3\textwidth]{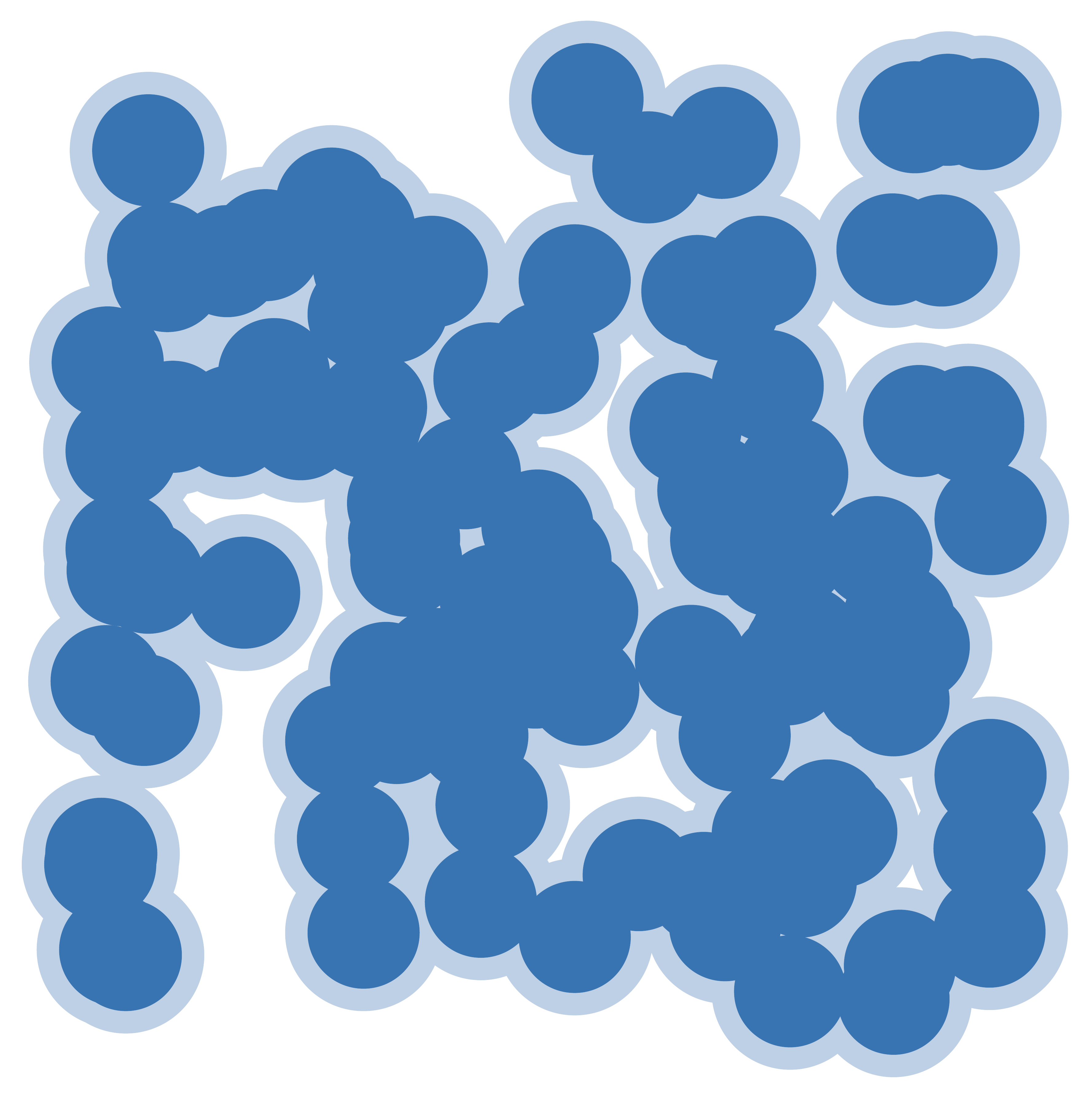}
\hfil
\includegraphics[width=.3\textwidth]{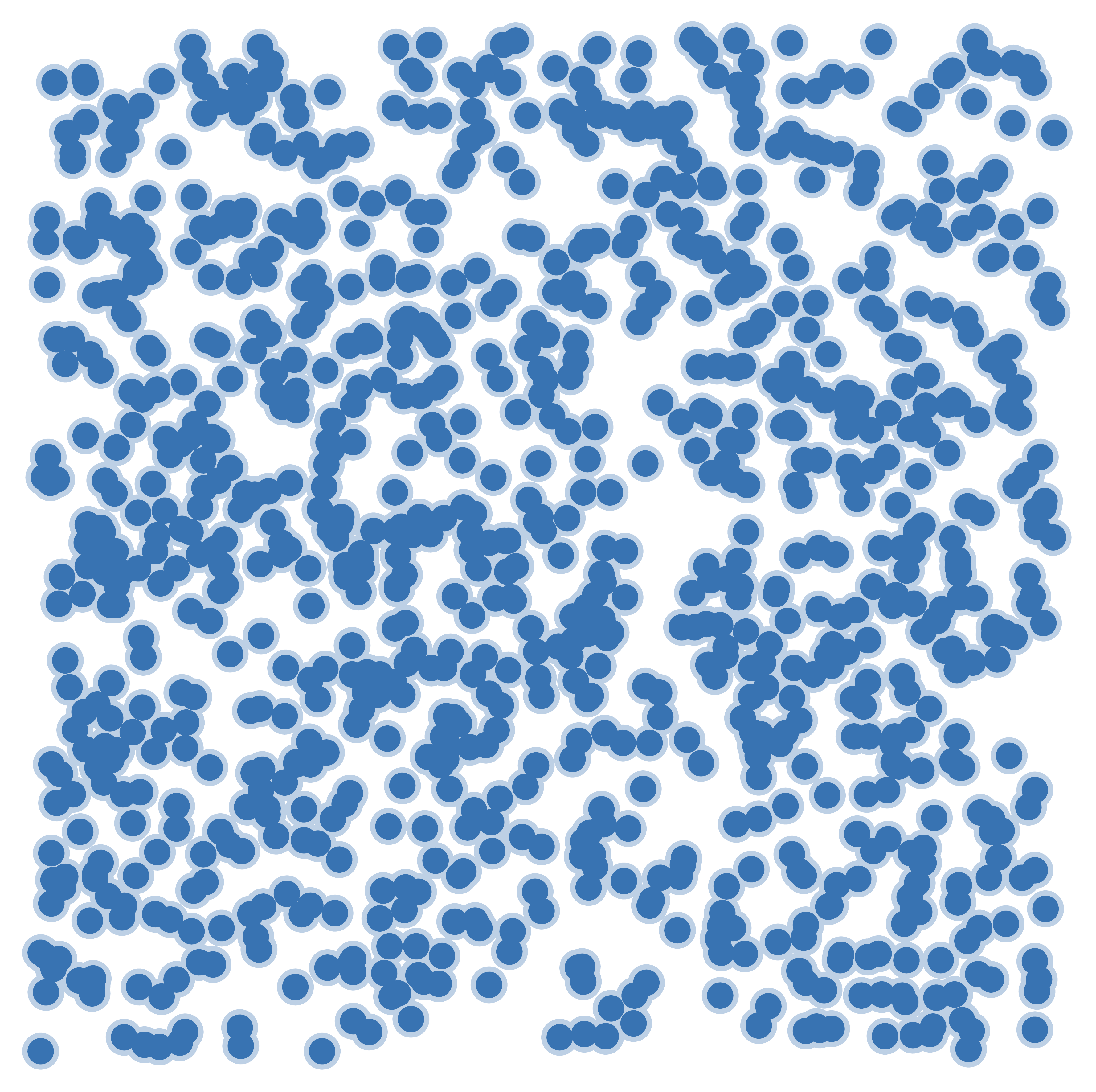}
\hfil
\includegraphics[width=.3\textwidth]{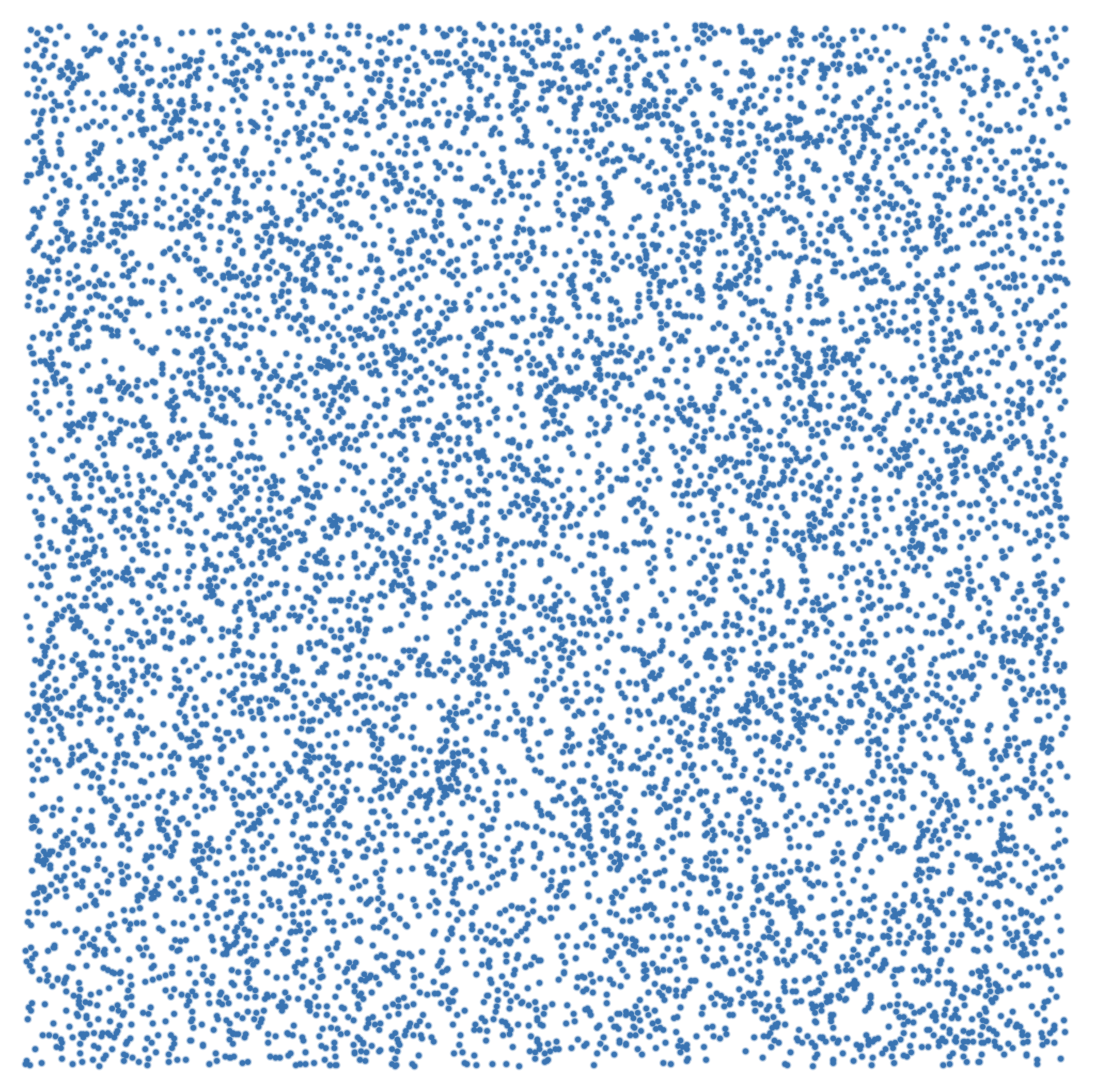}
\end{center}
\caption{Examples illustrating \cref{thm1}. Unions of balls 
of radius $r_n$ (dark) and $\vartheta r_n$ (light) 
around point sets $P_n$ of cardinality $n=100$ (left), $n=1000$ (center), $n=10000$ (right), drawn uniformly at random from $[-1,1]^2$, with parameters chosen such that $E(\beta_k^{\vartheta}(\mathcal{P}_n,r_n)) \in \Theta(1)$; specifically, $d=2, \vartheta = 1.4, k=1, m=m(\vartheta,k)=4,$ and $r_n=c n^q$ with $c=2.6, q=-\frac{m}{d(m-1)}$. In each of the three instances, we have $\beta_k^{\vartheta}(P_n,r_n)=1$.}
\label{fig:example}
\end{figure}

\section{The method of Penrose}
\label{sec2}

The goal of this section is to reformulate two technical results of Penrose \cite[Propositions 3.1 and 3.2]{Penrose2003Random} on the expectation of connected subgraph and component counts in random geometric graphs. In particular, we show that the proof method of Penrose allows us to state these results in a more general way, which we use to prove Theorem \ref{thm1} in the next sections.

\subsection{Finite geometric properties}
Let $\mathrm{P}(\RR^d)$ denote the power set of $\RR^d$.
A family of indicator functions $g_{r,p}: \mathrm{P}(\RR^d) \rightarrow \{ 0,1\}$, with $p \in \NN$ and $0< r \in \RR$, is called a \emph{finite geometric property} if 
\bigskip
\begin{compactenum}[(i)]
\item $g_{r,p}$ is \emph{translation invariant}, i.e., $g_{r,p}(Y)=g_{r,p}(Y+t)$ for all $t \in \RR^d$; 
\item $g_{r,p}$ is \emph{scaling equivariant}, i.e., $g_{r,p}(Y)=g_{\lambda r,p}(\lambda Y)$ for all $\lambda \in \RR$; 
\item $g_{r,p}$ is \emph{finite}, i.e., $|Y| \neq p \Longrightarrow g_{r,p}(Y)=0$ for every $Y \subseteq \RR^d$; in other words $g_{r,p}$ is zero unless $Y$ has $p$ elements;
\item $g_{r,p}$ is \emph{local}, i.e., any $Y \subset \RR^d$ with $g_{r,p}(Y)=1$ satisfies $\diam Y \leq C \cdot r\cdot p$ for some constant $C$.
\end{compactenum}
\bigskip

The first example motivating our definition is the finite geometric property of point configurations inducing a geometric graph isomorphic to a given connected graph $\Gamma$. To this end, let $G(P,r)$ denote 
the \emph{geometric graph on $P$ at scale $r$,} containing an edge precisely for every pair of points with distance at most $r$. 
Define an indicator function for all finite $Y \subset \RR^d$ by 
\begin{equation*}
\mathcal{H}_{r,p}(Y) := \mathbf{1}_{|Y| = p } \cdot \mathbf{1}_{G(Y,r) \cong \Gamma}. 
\end{equation*}
This indicator function checks whether the geometric graph built on the points of the finite set $Y$ is isomorphic to $\Gamma$, and satisfies all required properties.

The results of Penrose \cite{Penrose2003Random} concern the special case of connected geometric graphs. In our framework, we replace the condition that the geometric graph built on a finite set $Y$ is isomorphic to a \emph{connected} graph by the above more general condition of being a \emph{local} property. 
Note that isomorphism to a given connected graph $\Gamma$ is a local property since connectivity enforces the vertex set to have diameter bounded by $r \cdot p$.
The generalization to local properties enables us to study a wider class of indicator functions. For example,
\begin{equation*}
\mathcal{H}'_{r,p}(Y) := \mathbf{1}_{|Y| = p } \cdot \mathbf{1}_{\beta_0(Y,r) = p } \cdot \mathbf{1}_{\diam Y \leq r\cdot p }, 
\end{equation*}
for all finite $Y \subset \RR^d$ is a geometric property, which checks whether $Y$ consists of $p$ points such that all pairwise distances are greater than $r$ but less than or equal to $r\cdot p$.

We now turn to a particular class of finite geometric properties, which are very natural in certain settings. In particular, they appear whenever we are interested in a property that not only concerns the points of a subset $Y \subseteq X$ but also the elements of the complement $X \setminus Y$.

We call a family of indicator functions $h_{r,p}: \mathrm{P}(\RR^d)\times \mathrm{P}(\RR^d) \rightarrow \{ 0,1\}$ for $p \in \NN$ and $0< r \in \RR$ a \emph{finite geometric subset property} if
\bigskip
\begin{compactenum}[(i)]
\item $h_{r,p}(Y,X) = 0$ whenever $Y \not \subseteq X$;
\item $h_{r,p}(Y,X)$ can be written as a product $h_{r,p}(Y,X) = \tilde{h}_{r}(Y,X) \cdot g_{r,p}(Y)$, where $\tilde{h}_{r}$
is an indicator function and $g_{r,p}$ is a finite geometric property;
\item $h_{r,p}$ is \emph{translation invariant}, i.e., $h_{r,p}(Y,X)=h_{r,p}(Y+t,X+t)$ for all $t \in \RR^d$; 
\item $h_{r,p}$ is \emph{scaling equivariant}, i.e., $h_{r,p}(Y,X)=h_{\lambda r,p}(\lambda Y, \lambda X)$ for all $\lambda \in \RR$; 
\item $h_{r,p}$ is \emph{finite}, i.e., $|Y| \neq p \Longrightarrow h_{r,p}(Y,X)=0$ for every $Y \subseteq \RR^d$.
\end{compactenum}
\bigskip
Note that (ii) implies that $h_{r,p}$ is local since $g_{r,p}$ is assumed local as a finite geometric property. 
In contrast to just a geometric property, which depends only on the point set $Y$, a geometric subset property can also depend on the other points in the larger set $X$.

\begin{ex}
As an example which will also be used later, we say a subset $Y \subseteq X$ \emph{forms an isolated component} in the \v Cech complex $\Cech_s(X)$ of a point set~$X$ if the union of balls with radius $s$ centered at the points of $Y$ has an empty intersection with the union of balls centered at points of $X \setminus Y$ and if $\beta_0(Y,s)=1$. 
Then
$$
\comp_{r,p}(Y,X) = \left \{
\begin{array}{cl}
1 & \mbox{if $|Y|=p$ and $Y$ forms an isolated component in $\Cech_r(X)$, } \\
0 & \mbox{otherwise,}
\end{array}
\right.
$$
is a finite geometric subset property, 
which can be written as a product
\[
\comp_{r,p}(Y,X) = \sep_{r}(Y,X) \cdot \conn_{r,p}(Y), \]
where the indicator function $\sep_{r}$ is given by
\[
\sep_{r}(Y,X)=
\begin{cases}
1 & \text{if } d(x,y) > 2r \text{ for all } x \in X\setminus Y \text{ and } y \in Y,\\
0 & \text{otherwise.}
\end{cases}
\]
and
the  finite geometric property $\conn_{r,p}$ is given by 
\[
\conn_{r,p}(Y)=
\begin{cases}
1 & \mbox{if $|Y|=p$ and $\beta_0(Y,r)=1$, } \\
0 & \mbox{otherwise.}
\end{cases}
\]
\end{ex}

\subsection{Subset counts}
We are interested in the expected number of occurrences of a finite geometric property in a random point set, i.e., the number of subsets satisfying the property. Penrose \cite{Penrose2003Random} states our Propositions \ref{prop:penrose1} and \ref{prop:penrose2} in the setting of subgraph counts. His proofs, however, apply to our more general setting with only minor modifications, as we will now show.
For a finite $P \subset \RR^d$ we define
\begin{equation}
\Count(g_{r,p}, P) := \sum_{Y \subseteq P} g_{r,p}(Y) \ \ \ \ \text{ and } \ \ \ \ \SubsetCount(h_{r,p}, P) := \sum_{Y \subseteq P} h_{r,p}(Y,P),
\end{equation}
which count the occurrences of a finite geometric property $g_{r,p}$ and of a finite geometric subset property $h_{r,p}$ in the set $P$, respectively.

Considering the random setting of a Poisson point process $\mathcal{P}_n$ on $\RR^d$ with intensity function $x \mapsto n f(x)$, where $f$ is a bounded probability density function on $\RR^d$, and given a finite geometric property $g_{r,p}$, we define
\begin{equation}
\mu_{g_{1,p}} := \frac{1}{p!} \int_{\RR^d} f(x)^p \, dx \ \int_{(\RR^d)^{p-1}} g_{1,p}( \{ 0, x_1, \ldots, x_{p-1} \} ) \, d(x_1, \ldots, x_{p-1}).
\label{mu}
\end{equation}
Note that the boundedness of $f$ implies that $f(x)^p$ is integrable: Let $f\leq C$ for some constant $C$ and assume by induction that $f^{p-1}$ is integrable. Then $f^p \leq f^{p-1} \cdot C$ is integrable.
Furthermore, the condition that $g_{r,p}$ is local implies that $g_{1,p}( \{ 0, x_1, \ldots, x_{p-1} \})$ is integrable since it is bounded and zero outside of a bounded region, a ball centered at $0$ with radius equal to the diameter bound from the locality condition. We obtain the following asymptotic result:

\begin{prop}[see \cite{Penrose2003Random}, Proposition 3.1] \label{prop:penrose1}
Suppose that $g_{r,p}$ is a finite geometric property for $p\geq 2$ that occurs with positive probability for $\mathcal{X}_p$ and for all sufficiently small $r > 0$. Let $\lim_{n\rightarrow \infty} r_n = 0$. Then
\begin{equation*}
\underset{n \rightarrow \infty} \lim \frac{E(\Count(g_{r_n,p}, \mathcal{X}_n)) }{n ({r_n}^d n)^{(p-1)} } = 
\underset{n \rightarrow \infty} \lim \frac{E(\Count(g_{r_n,p}, \mathcal{P}_n)) }{n ({r_n}^d n)^{(p-1)} } = 
\mu_{g_{1,p}}.
\end{equation*}
\end{prop}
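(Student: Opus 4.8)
The plan is to reduce both limits to a single asymptotic estimate for $E(g_{r_n,p}(\{x_1,\dots,x_p\}))$ and then evaluate that estimate by the changes of variables dictated by translation invariance and scaling equivariance. Since $g_{r_n,p}$ is zero on subsets whose cardinality differs from $p$, exchangeability of the i.i.d.\ points gives, for the binomial process,
\[
E(\Count(g_{r_n,p},\mathcal{X}_n)) = \binom{n}{p}\, E\bigl(g_{r_n,p}(\{x_1,\dots,x_p\})\bigr).
\]
For the Poisson process I would apply Palm theory (\cref{thm:palm}) to the bounded measurable function $h(Y,X) := g_{r_n,p}(Y)$, which is independent of its second argument and supported on $p$-element sets; this yields
\[
E(\Count(g_{r_n,p},\mathcal{P}_n)) = \frac{n^p}{p!}\, E\bigl(g_{r_n,p}(\mathcal{X}_p)\bigr).
\]
As $\binom{n}{p}\sim n^p/p!$, both counts are governed by the same quantity $E(g_{r_n,p}(\{x_1,\dots,x_p\}))$.

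Next I would expand this expectation as the $p$-fold integral $\int_{(\RR^d)^p} g_{r_n,p}(\{y_1,\dots,y_p\})\prod_{i=1}^p f(y_i)\,dy$, set $y_1=y$ and $y_{i}=y+z_{i-1}$, and use translation invariance to replace the geometric factor by $g_{r_n,p}(\{0,z_1,\dots,z_{p-1}\})$. Applying scaling equivariance with $\lambda=1/r_n$ and substituting $z_i=r_n w_i$ (Jacobian $r_n^{d(p-1)}$) gives
\[
E\bigl(g_{r_n,p}(\{x_1,\dots,x_p\})\bigr) = r_n^{d(p-1)}\int_{(\RR^d)^{p-1}} g_{1,p}(\{0,w_1,\dots,w_{p-1}\})\,F_n(w)\,dw,
\]
where $F_n(w) := \int_{\RR^d} f(y)\prod_{i=1}^{p-1} f(y+r_n w_i)\,dy$.

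The main obstacle is the interchange of limit and integration. I would first show $F_n(w)\to\int_{\RR^d} f(y)^p\,dy$ for every fixed $w$ by a telescoping estimate: writing the difference of products $\prod_i f(y+r_n w_i)-f(y)^{p-1}$ as a sum of $p-1$ terms, each isolating one factor $f(y+r_n w_j)-f(y)$ and bounding the remaining factors by $\|f\|_\infty$, reduces the claim to $L^1$-continuity of translation, $\|f(\cdot+r_n w_j)-f\|_1\to 0$, which holds for $f\in L^1(\RR^d)$. Since $F_n$ is bounded uniformly by $\|f\|_\infty^{p-1}$ (as $f$ integrates to $1$) and the locality condition (iv) confines the support of $w\mapsto g_{1,p}(\{0,w_1,\dots,w_{p-1}\})$ to a fixed bounded region (each $w_i$ lies in a ball of radius $Cp$ about the origin), dominated convergence gives
\[
E\bigl(g_{r_n,p}(\{x_1,\dots,x_p\})\bigr)\sim r_n^{d(p-1)}\int_{\RR^d} f(y)^p\,dy\int_{(\RR^d)^{p-1}} g_{1,p}(\{0,w_1,\dots,w_{p-1}\})\,dw = r_n^{d(p-1)}\,p!\,\mu_{g_{1,p}}.
\]

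Finally, combining this with the two prefactors and using $n(r_n^d n)^{p-1}=n^p r_n^{d(p-1)}$, both normalized expectations converge to $\mu_{g_{1,p}}$, which proves the two equalities. Everything apart from the dominated-convergence step is routine bookkeeping; the positive-probability hypothesis enters only to ensure $\int g_{1,p}(\{0,w\})\,dw>0$, so that $\mu_{g_{1,p}}$ is a strictly positive (and, by locality and boundedness of $f$, finite) constant.
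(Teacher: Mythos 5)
Your proposal is correct and follows essentially the same route as the paper: the exchangeability identity $E(\Count(g_{r_n,p},\mathcal{X}_n))=\binom{n}{p}E(g_{r_n,p}(\mathcal{X}_p))$, the change of variables exploiting translation invariance and scaling equivariance (with Jacobian $r_n^{d(p-1)}$ and locality guaranteeing a bounded domain), and Palm theory to transfer to the Poisson process. The only structural difference is that where the paper isolates the main term with density $f(x_1)^p$ and cites Penrose for the vanishing of the remainder, you prove the convergence of the full integral directly via $L^1$-continuity of translation and dominated convergence --- which is precisely the content of the argument the paper omits by reference, so your write-up amounts to a self-contained version of the same proof.
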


\begin{proof}
We slightly modify the proof of \cite[Proposition 3.1]{Penrose2003Random}. The goal is to highlight that we can replace the subgraph counts of Penrose by our more abstract finite geometric properties. First, we consider binomial point processes $\mathcal{X}_n$ of fixed size $n$, before applying Palm theory to obtain the desired result for Poisson processes $\mathcal{P}_n$.

Since our geometric property is \emph{finite}, we can write
$$ E(\Count(g_{r_n,p}, \mathcal{X}_n)) = {n \choose p} E(g_{r_n,p}(\mathcal{X}_p)), $$
in which $\mathcal{X}_p$ denotes a set of exactly $p$ random points. Hence,
\begin{align}
\notag E(\Count(g_{r_n,p}, \mathcal{X}_n)) &=  {n \choose p} \int_{\RR^d} \ldots \int_{\RR^d} g_{r_n,p}(\{ x_1, \ldots, x_p\})
f(x_1)^p \, dx_p \ldots dx_1 \\
\label{sums} &+ {n \choose p} \int_{\RR^d} \ldots \int_{\RR^d} g_{r_n,p}(\{ x_1, \ldots, x_p\}) \left( \prod_{i=1}^p f(x_i)-f(x_1)^p  \right) \prod_{i=1}^p dx_i.
\end{align}
Applying a change of variables $x_1=x$ and $x_i = x_1 + r_n y_i$ for $2 \leq i \leq p$, the first summand on the right hand side of \eqref{sums} becomes
\begin{align}
\label{firstsummand}
 {n \choose p} {r_n}^{d(p-1)} \int_{\RR^d} \ldots \int_{\RR^d} g_{r_n,p}(\{ x, x+r_n y_2, \ldots, x+r_n y_p\}) \, dy_p \ldots dy_2 \ f(x)^p \, dx.
\end{align}
Since the finite geometric property is {translation invariant} and {scaling equivariant}, we have $g_{r_n,p}(\{ x, x+r_n y_2, \ldots, x+r_n y_p\}) = g_{1,p}(\{ 0, y_2, \ldots, y_p\})$.
Comparing with \eqref{mu}, we can rewrite~\eqref{firstsummand} as
$$
{p!}{{n \choose p}}{r_n}^{d(p-1)}\mu_{g_{1,p}} $$
and conclude that the first summand on the right hand side of \eqref{sums} divided by $n^p {r_n}^{d(p-1)} \mu_{g_{1,p}}$ converges to $1$ as $n$ goes to infinity.

Moreover, the second summand in \eqref{sums} multiplied by ${n}^p {r_n}^{d(p-1)}$ tends to zero, as shown in the proof of \cite[Proposition 3.1]{Penrose2003Random}; we omit the argument here, which carries over verbatim to our context. We conclude that
\begin{equation*}
\underset{n \rightarrow \infty} \lim \frac{E(\Count(g_{r_n,p}, \mathcal{X}_n)) }{{n}^p {r_n}^{d(p-1)}} = \mu_{g_{1,p}}.
\end{equation*}

Finally, we use Palm theory to obtain the result for the Poisson process $\mathcal{P}_n$ from the one for the binomial process $\mathcal{X}_n$. 
Applying Theorem \ref{thm:palm} to the function $h: (Y,X) \mapsto g_{r_n,p}(Y)$,
we obtain
$$ E(\Count(g_{r_n,p}, \mathcal{P}_n)) = E \Bigg( \sum_{Y \subseteq \mathcal{P}_n } g_{r_n,p}(Y) \Bigg ) = \frac{n^p}{p!} E(g_{r_n,p}(\mathcal{X}_p)). $$
On the other hand, we have $E(\Count(g_{r_n,p}, \mathcal{X}_n)) = {n \choose p} E(g_{r_n,p}(\mathcal{X}_p))$. Hence, $$\lim_{n \rightarrow \infty} \frac{E(\Count(g_{r_n,p}, \mathcal{P}_n))}{E(\Count(g_{r_n,p}, \mathcal{X}_n))} 
= 1 , $$ and the result follows.
\end{proof}

Next, we prove an analogous proposition for finite geometric subset properties. 
Our proposition generalizes a similar statement by Penrose about the number of connected components with a given isomorphism type in a random geometric graph \cite[Proposition 3.2]{Penrose2003Random}.

\begin{prop}
\label{prop:penrose2}
Suppose that ${h}_{r,p}$, for $p \geq 2$ and $r>0$, is a finite geometric subset property, given as the product 
$${h}_{r,p}(Y,X) = \tilde h_{r}(Y,X) \cdot g_{r,p}(Y)$$ 
of an indicator function $\tilde h_{r}(Y,X)$ and a finite geometric property $g_{r,p}(Y)$.
Assume $n \geq p$, and let $\mathcal{X}_p = \{x_1, \dots , x_p \} \subseteq \mathcal{X}_n$.
If ${g}_{r,p}$ occurs with positive probability for $\mathcal{X}_p$ and for all sufficiently small $r > 0$, and 
if additionally $g_{{r_n},p}(Y)=1$ implies $\tilde h_{{r_n}}(\mathcal{X}_p,\mathcal{X}_n)=1$ asymptotically almost surely as $n \to \infty$, then
\begin{equation*}
\underset{n \rightarrow \infty} \lim \frac{E(\SubsetCount({h}_{r_n,p}, \mathcal{X}_n)) }{n ({r_n}^d n)^{(p-1)}} =
\underset{n \rightarrow \infty} \lim \frac{E(\SubsetCount({h}_{r_n,p}, \mathcal{P}_n)) }{n ({r_n}^d n)^{(p-1)}} =
 \mu_{g_{1,p}}.
\end{equation*}
\end{prop}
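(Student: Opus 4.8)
The plan is to deduce the proposition from Proposition~\ref{prop:penrose1}, applied to the finite geometric property $g_{r,p}$, by controlling the discrepancy caused by the extra indicator factor $\tilde h_r$. The starting observation is that, since $\tilde h_r$ takes values in $\{0,1\}$, we have the pointwise inequality
\[
h_{r,p}(Y,X) = \tilde h_r(Y,X)\cdot g_{r,p}(Y) \le g_{r,p}(Y),
\]
and hence $\SubsetCount(h_{r_n,p},P) \le \Count(g_{r_n,p},P)$ for every finite $P$. Taking expectations and invoking Proposition~\ref{prop:penrose1} immediately yields the upper bounds
\[
\limsup_{n\to\infty}\frac{E(\SubsetCount(h_{r_n,p},\mathcal{X}_n))}{n({r_n}^d n)^{(p-1)}} \le \mu_{g_{1,p}}, \qquad \limsup_{n\to\infty}\frac{E(\SubsetCount(h_{r_n,p},\mathcal{P}_n))}{n({r_n}^d n)^{(p-1)}} \le \mu_{g_{1,p}}
\]
for both the binomial and the Poisson model at once, so the entire content of the statement lies in the matching lower bounds.

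For the binomial process I would first use finiteness of $h_{r_n,p}$ to write $E(\SubsetCount(h_{r_n,p},\mathcal{X}_n)) = \binom{n}{p}\,E(h_{r_n,p}(\mathcal{X}_p,\mathcal{X}_n))$, exactly as in the proof of Proposition~\ref{prop:penrose1}. Splitting off the factor $g_{r_n,p}$ yields the decomposition
\[
E(h_{r_n,p}(\mathcal{X}_p,\mathcal{X}_n)) = E(g_{r_n,p}(\mathcal{X}_p)) - E\big(g_{r_n,p}(\mathcal{X}_p)\,(1 - \tilde h_{r_n}(\mathcal{X}_p,\mathcal{X}_n))\big).
\]
Multiplied by $\binom{n}{p}$, the leading term is exactly $E(\Count(g_{r_n,p},\mathcal{X}_n))$ and contributes $\mu_{g_{1,p}}$ after normalization by Proposition~\ref{prop:penrose1}. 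For the error term I would invoke the hypothesis that $g_{r_n,p}(\mathcal{X}_p)=1$ implies $\tilde h_{r_n}(\mathcal{X}_p,\mathcal{X}_n)=1$ asymptotically almost surely, which says precisely that
\[
\frac{E\big(g_{r_n,p}(\mathcal{X}_p)(1-\tilde h_{r_n}(\mathcal{X}_p,\mathcal{X}_n))\big)}{E(g_{r_n,p}(\mathcal{X}_p))} = P\big(\tilde h_{r_n}(\mathcal{X}_p,\mathcal{X}_n)=0 \mid g_{r_n,p}(\mathcal{X}_p)=1\big) \longrightarrow 0.
\]
Hence the error term is $o\big(E(g_{r_n,p}(\mathcal{X}_p))\big)$ and, after the same normalization, is negligible compared to the leading term, establishing the limit $\mu_{g_{1,p}}$ for $\mathcal{X}_n$.

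To transfer the lower bound to the Poisson process I would apply Palm theory (Theorem~\ref{thm:palm}) to the bounded measurable function $(Y,X)\mapsto h_{r_n,p}(Y,X)$, which vanishes unless $|Y|=p$, obtaining
\[
E(\SubsetCount(h_{r_n,p},\mathcal{P}_n)) = \frac{n^p}{p!}\,E\big(h_{r_n,p}(\mathcal{X}_p,\mathcal{X}_p\cup\mathcal{P}_n)\big).
\]
Since the prefactor ratio $\tfrac{n^p/p!}{\binom{n}{p}}$ tends to $1$ as in Proposition~\ref{prop:penrose1}, it suffices to show that $E(h_{r_n,p}(\mathcal{X}_p,\mathcal{X}_p\cup\mathcal{P}_n))$ is asymptotically equivalent to $E(g_{r_n,p}(\mathcal{X}_p))$; running the decomposition above reduces this to showing that the error probability $P(\tilde h_{r_n}(\mathcal{X}_p,\mathcal{X}_p\cup\mathcal{P}_n)=0 \mid g_{r_n,p}(\mathcal{X}_p)=1)$ tends to $0$.

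I expect this last point to be the main obstacle: the hypothesis is stated for the binomial set $\mathcal{X}_n$, whereas Palm theory naturally produces the augmented set $\mathcal{X}_p\cup\mathcal{P}_n$. I would handle it by noting that the asymptotically-almost-sure condition is a robust statement about the absence of interfering points in a region whose diameter is controlled, via the locality of $g_{r,p}$, by $\mathcal{O}(r_n\cdot p)$, and that the difference between drawing exactly $n$ points and superimposing a Poisson$(n)$ number of points on the fixed configuration $\mathcal{X}_p$ is asymptotically immaterial for such void-type events, since both models share the first-order intensity $n f$. Concretely, in the applications where this proposition is used $\tilde h_r$ is the separation indicator $\sep_r$ and the relevant event is that no further point of the process lies within distance $2r_n$ of the cluster, so the condition can simply be verified directly for $\mathcal{X}_p\cup\mathcal{P}_n$; alternatively one argues a de-Poissonization estimate comparing the two error probabilities. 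Combining the matching upper and lower bounds then gives the common limit $\mu_{g_{1,p}}$ for both models.
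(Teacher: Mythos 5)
Your proposal is correct and follows essentially the same route as the paper: the paper's proof conditions on the event $g_{r_n,p}(\mathcal{X}_p)=1$ and uses the a.a.s.\ hypothesis to conclude $E(\SubsetCount(h_{r_n,p},\mathcal{X}_n)) \in (1+o(1))\,E(\Count(g_{r_n,p},\mathcal{X}_n))$, then invokes Proposition~\ref{prop:penrose1}; your expectation decomposition is exactly this conditional-probability identity written out, and your pointwise bound $h\le g$ is the same upper estimate made explicit. The one point where you go beyond the paper is in flagging that the a.a.s.\ hypothesis is stated for $\mathcal{X}_n$ while Palm theory produces $\mathcal{X}_p\cup\mathcal{P}_n$ --- the paper's proof silently passes over this transfer, so your explicit (if only sketched) de-Poissonization remark is a genuine improvement in care rather than a divergence in method.
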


\begin{proof}
Let $\mathfrak{A}$ be the event ${h}_{r_n,p}(\mathcal{X}_p,\mathcal{X}_n) = 1$, $\mathfrak{B}$ the event $g_{r_n,p}(\mathcal{X}_p) = 1$, and $\mathfrak{C}$ the event $\tilde h_{r_n}(\mathcal{X}_p,\mathcal{X}_n) = 1$.
Given $\mathfrak{B}$, the conditional probability of event $\mathfrak{A}$ is the conditional probability of event $\mathfrak{C}$, which tends to $1$ by assumption. Hence
\begin{align*} 
P( \mathfrak{A}) &= P( \mathfrak{A} \mid \mathfrak{B} ) P(\mathfrak{B}) = 
P( \mathfrak{C} \mid \mathfrak{B} ) P(\mathfrak{B}) \rightarrow P(\mathfrak{B}) = E(\Count(g_{r_n,p}, \mathcal{X}_n)),
\end{align*}
and we obtain
\begin{align*}
\SwapAboveDisplaySkip
E(\SubsetCount({h}_{{r_n},p}, \mathcal{X}_n)) &= {n \choose p} P(\mathfrak{A})
\in  (1+ o(1) ) E(\Count(g_{r_n,p}, \mathcal{X}_n)).
\end{align*}
The claim now follows from Proposition \ref{prop:penrose1}.
\end{proof}

\begin{ex}\label{ex:graphComponentCount}
As an example, let $\Gamma$ be a fixed, connected graph on $p$ vertices, $p \geq 2$.
We consider the component count $J_n(\Gamma)$, i.e., the number of components isomorphic to $\Gamma$, as studied by Penrose \cite{Penrose2003Random}. 
Using
$$ {h}_{r_n,p}(Y,\mathcal{X}_n) = \sep_{r_n}(Y,\mathcal{X}_n) \cdot \mathbf{1}_{G(Y,r_n) \cong \Gamma},$$
this function can be written as
$$ J_n(\Gamma) = \SubsetCount({h}_{r_n,p}, \mathcal{X}_n)) = \sum_{Y \subseteq \mathcal{X}_n} {h}_{r_n,p}(Y,\mathcal{X}_n). $$
Note that $G(Y,r_n) \cong \Gamma$ a.a.s.\@ implies $\sep_{r_n}(Y,\mathcal{X}_n)=1$, as the volume of the union of balls of radius $2r_n$ around $X \setminus Y$ goes to zero as $n \to \infty$, and hence the probability that this set contains a point in $Y$.
Thus, Proposition \ref{prop:penrose2} gives the same result as \cite[Proposition 3.2]{Penrose2003Random}, namely
$$ \underset{n \rightarrow \infty} \lim \frac{E( J_n(\Gamma) ) }{n ({r_n}^d n)^{(p-1)} } = \mu_{\mathbf{1}_{G(Y,1) \cong \Gamma}}. $$
\end{ex}

\section{Lower Bound}
\label{sec3}

In this section we obtain an asymptotic lower bound for the expected persistent Betti number $\beta_k^{\vartheta}(\mathcal{P}_n,r_n)$. We refer to  Munkres \cite{Munkres1984Elements} as a reference on simplicial homology. In the following, we use $C_k(K)$, $Z_k(K)$, and $B_k(K)$ to denote the $k$-chains, $k$-cycles, and $k$-boundaries of a simplicial complex $K$ with coefficients in a fixed field $\mathbb K$.

\subsection{Minimal $\vartheta$-persistent cycles}
\begin{defi}
Let $P \subset \mathbb R^d$ be such that there exists an $r>0$, a $\vartheta \geq 1$, and a non-bounding cycle 
$$\gamma \in Z_k(\Cech_r(P) ) \setminus B_k(\Cech_{\vartheta r}(P) ).$$ 
Then we say that the point set $P$ \emph{forms the $\vartheta$-persistent $k$-cycle} $\gamma$ in $\Cech_r(P)$.
\end{defi}
We define
\begin{align*}
m(\vartheta, k) :=  \min \{|P| : P \text{ forms a $\vartheta$-persistent cycle} \};
\end{align*}
that is, $m(\vartheta,k)$ is the minimal number of vertices needed to form a $k$-cycle with persistence greater than $\vartheta$. 
The quantity $m(\vartheta, k)$ plays an important role in
\cite[Sections 4 and 5]{Bobrowski2016Maximally}, where the authors provide (somewhat implicitly) lower and upper bounds on this number.
For our purposes, the following argument providing an upper bound is sufficient:

\begin{lem} 
\label{lem:thetaPersistentCyclesExist}
For every $\vartheta \geq 1$, $m(\vartheta, k)$ is bounded from above. 
\end{lem}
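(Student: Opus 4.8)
The plan is to exhibit, for each fixed $\vartheta \ge 1$ and each $1 \le k \le d-1$, a single finite point set $P$ that forms a $\vartheta$-persistent $k$-cycle; the cardinality of this set then bounds $m(\vartheta,k)$ from above. The natural candidate is a sufficiently dense finite sample of a round $k$-sphere. Concretely, I would fix the unit sphere $S^k \subset \RR^{k+1} \subseteq \RR^d$ (the inclusion $\RR^{k+1} \subseteq \RR^d$ is available precisely because $k \le d-1$), and take $P \subset S^k$ to be a finite $\varepsilon$-dense sample, i.e.\ every point of $S^k$ lies within distance $\varepsilon$ of $P$, where $\varepsilon > 0$ is chosen small depending on $\vartheta$.

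The mechanism driving the argument is that, for a fixed such sample, the union of balls $\Ball_s(P)$ is homotopy equivalent to $S^k$ for every radius $s$ in an intermediate range $[s_1, s_2]$. The lower endpoint $s_1$ is of order $\varepsilon$, since the balls must overlap enough to cover $S^k$ and recover its homotopy type, while the upper endpoint $s_2$ is controlled by the reach of $S^k$, which equals $1$; thus $s_2$ can be taken to be a fixed fraction of $1$, independent of $\varepsilon$. Consequently the ratio $s_2/s_1$ is of order $1/\varepsilon$ and tends to infinity as the sample is refined. I would therefore choose $\varepsilon$ small enough that $s_2/s_1 > \vartheta$, and then set $r := s_1$, so that both $r$ and $\vartheta r$ lie in $[s_1, s_2]$. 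Hence both $\Ball_r(P)$ and $\Ball_{\vartheta r}(P)$, and by the Nerve Theorem both $\Cech_r(P)$ and $\Cech_{\vartheta r}(P)$, are homotopy equivalent to $S^k$.

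To extract a persistent cycle I would use that these homotopy equivalences are realized compatibly with the inclusion. The nearest-point projection onto $S^k$ is well defined on the open tube of radius $1$ about $S^k$ and restricts to a deformation retraction of $\Ball_s(P)$ onto $S^k$ for every $s$ in the range; since the same projection serves all radii, the square formed by these retractions and the inclusion $\Ball_r(P) \hookrightarrow \Ball_{\vartheta r}(P)$ commutes. Therefore the inclusion induces an isomorphism $H_k(\Cech_r(P)) \to H_k(\Cech_{\vartheta r}(P))$, both groups being isomorphic to $H_k(S^k) \cong \mathbb K$. Choosing a cycle $\gamma \in Z_k(\Cech_r(P))$ representing a generator, its class stays nonzero in $H_k(\Cech_{\vartheta r}(P))$, i.e.\ $\gamma \notin B_k(\Cech_{\vartheta r}(P))$. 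Thus $P$ forms a $\vartheta$-persistent $k$-cycle and $m(\vartheta,k) \le |P| < \infty$.

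The one nontrivial point, and the step I expect to be the main obstacle, is the homotopy equivalence $\Ball_s(P) \simeq S^k$ together with the compatibility just described: the union of balls is not the full tubular neighborhood of $S^k$, so one must justify that the deformation retraction by nearest-point projection actually stays inside $\Ball_s(P)$, which requires combining the density of $P$ with the reach of $S^k$. This is exactly the content of the sampling theorem of Niyogi, Smale and Weinberger, which guarantees $\Ball_s(P) \simeq S^k$ once $\varepsilon$ is a small enough multiple of the reach and $s$ lies in the stated range; I would invoke it to complete the argument. Note that only the fact that $S^k$ is a smooth compact $k$-manifold of positive reach in $\RR^d$ with $H_k(S^k) \cong \mathbb K$ is used, so no special feature of the round sphere is essential beyond its nontrivial $k$th homology.
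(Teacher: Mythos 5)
Your proof is correct, but it takes a genuinely different route from the paper. The paper's construction is purely elementary: it takes the boundary of a $(k+1)$-simplex $\Delta$, barycentrically subdivides it until all simplices have diameter at most $r$, and uses the vertex set $Q$ of the subdivision; the fundamental cycle of $\bd\Delta$ lives in $\Cech_r(Q)$, and its non-bounding in $\Cech_{\vartheta r}(Q)$ is witnessed directly by the fact that the circumcenter of $\Delta$ (and hence the orthogonal affine subspace through it) is not covered by $\Ball_{\vartheta r}(Q)$, since $\vartheta r$ is chosen smaller than the circumradius-to-boundary distance. Your construction instead samples a round $k$-sphere densely and invokes the Niyogi--Smale--Weinberger sampling theorem to get $\Ball_s(P)\simeq S^k$ for all $s$ in an interval $[s_1,s_2]$ with $s_1$ of the order of the sample density and $s_2$ a fixed fraction of the reach, so that refining the sample makes the ratio $s_2/s_1$ exceed $\vartheta$. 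What the paper's approach buys is self-containedness (no external sampling theorem; note that $\bd\Delta$ is a PL sphere of reach zero, so NSW would not even apply to it, and the circumcenter argument substitutes for it); what yours buys is a cleaner conceptual picture --- the inclusion induces an isomorphism $\mathbb K\to\mathbb K$ on $H_k$ over the whole interval, and the argument generalizes verbatim to any closed $k$-manifold of positive reach with nontrivial $H_k$. Two minor points to tighten: you should say explicitly that the identification $\Cech_s(P)\simeq\Ball_s(P)$ is natural with respect to the inclusions of radii (the persistent nerve lemma), since the persistent Betti number is defined on the Čech complexes rather than the unions of balls --- the paper leaves the same step implicit elsewhere, so this is cosmetic; and you should record that the resulting bound on $m(\vartheta,k)$ depends on $\vartheta$ through the required density $\varepsilon$, which is exactly the dependence the lemma permits.
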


\begin{proof}
Consider the standard $k+1$-simplex $\Delta$ in $\mathbb R^{k+2} \subset \mathbb R^d$. 
Choose any $r>0$ such that $\vartheta r$ is less than the distance %
from the circumcenter of the simplex to any point on its boundary, $\bd \Delta$. 
For example, it suffices to choose $r<\frac{1}{2\vartheta}$.
We may choose $i \in \mathbb N$ such that the simplices of the $i$th barycentric subdivision of $\bd \Delta$ all have diameter at most $r$ (see, e.g., \cite[Chapter 3]{Spanier}); let $Q$ be the vertices of the resulting subdivision. Now $Q \subset \bd \Delta \subset \Ball_r(Q)$ but $\Delta \not\subset \Ball_{\vartheta r}(Q)$ by the choice of $r$, so $\bd \Delta$ supports a $\vartheta$-persistent $k$-cycle formed by $Q$.
\end{proof}

To simplify notation, from now on we assume that $1 \leq k \leq d-1$ and $\vartheta \geq 1$ are fixed, and set $m = m(\vartheta,k)$.

\begin{defi}
Let $P \subset \mathbb R^d$.
A subset $P'=\{ x_1, x_2, \ldots, x_p \} \subset P$ is said to \emph{form a $\vartheta$-persistent isolated $k$-cycle in $\Cech_r(P)$} if it forms a $\vartheta$-persistent $k$-cycle in $\Cech_r(P')$ and $P'$ is a $\vartheta r$-isolated subset of $P$.
If $p=m$, we call this cycle \emph{minimal}.

\end{defi}

The condition that $P'$ is $\vartheta r$-isolated is equivalent to the condition that there are no edges between $P'$ and $P \setminus P'$ in $\Cech_{\vartheta r}(P)$.
Note that this definition therefore ensures that a $\vartheta$-persistent isolated $k$-cycle formed by $P'$ is not only non-bounding in $\Cech_{\vartheta r}(P')$, but also in $\Cech_{\vartheta r}(P)$.

\begin{lem} \label{lem:prob}
Let $\vartheta \geq 1$ and $0 < r < \frac{1}{2\vartheta}$. Then $\mathcal{P}_n$ contains with positive probability a subset of $m=m(\vartheta, k)$ vertices forming a minimal $\vartheta$-persistent isolated  $k$-cycle in $\Cech_r(\mathcal{P}_n)$.
\end{lem}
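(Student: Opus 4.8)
The plan is to exhibit an open (hence positive-measure) family of point configurations that robustly form a minimal $\vartheta$-persistent $k$-cycle, and then to show that the Poisson process $\mathcal{P}_n$ drops exactly one point into each of $m$ prescribed regions, with a surrounding buffer kept empty, with positive probability. First I would fix a concrete minimal configuration with some quantitative slack. By \cref{lem:thetaPersistentCyclesExist} the set over which $m=m(\vartheta,k)$ is minimized is nonempty, so there is a set $P_0$ of exactly $m$ points forming a $\vartheta$-persistent $k$-cycle $\gamma$ at some radius $r'$; let $b$ and $d$ be the birth and death radii of the class $[\gamma]$ in the Čech filtration of $P_0$. The defining condition $\gamma \in Z_k(\Cech_{r'}(P_0)) \setminus B_k(\Cech_{\vartheta r'}(P_0))$ forces $b \leq r'$ and $d > \vartheta r' \geq \vartheta b$, so $d/b > \vartheta$ strictly. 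Since forming a $\vartheta$-persistent $k$-cycle depends only on the configuration up to scaling, I would rescale $P_0 = \{a_1,\dots,a_m\}$ so that, at the given radius $r$, we have the strict inequalities $b < r$ and $\vartheta r < d$; this is possible precisely because $d/b > \vartheta$.

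The heart of the argument, and the step I expect to be the main obstacle, is a robustness (openness) statement: every sufficiently small perturbation of $P_0$ still forms a $\vartheta$-persistent $k$-cycle at radius $r$. The appearance radius of a simplex in a Čech filtration is the minimal enclosing radius of its vertices, a continuous function of the vertex positions; consequently the birth and death radii of $[\gamma]$ vary continuously with the configuration (equivalently, this follows from the stability of persistence diagrams under the Hausdorff distance of the underlying point sets). Since at $P_0$ we arranged $b < r$ and $\vartheta r < d$ with strict inequalities, there is an $\epsilon > 0$ such that every configuration $\{x_1,\dots,x_m\}$ with $\norm{x_i - a_i} < \epsilon$ for all $i$ still has birth radius $< r$ and death radius $> \vartheta r$, and hence still forms a $\vartheta$-persistent $k$-cycle in $\Cech_r$. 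Shrinking $\epsilon$ if necessary, I arrange that the balls $U_i := \{x : \norm{x - a_i} < \epsilon\}$ are pairwise disjoint.

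It remains to realize such a configuration inside $\mathcal{P}_n$ with positive probability. Applying a rigid motion to the whole configuration, which preserves the cycle property, I would position it so that each $U_i$ has positive $f$-measure; this is possible because $f$ is a nonzero probability density and the configuration has diameter $O(r)$, so for $r$ small (the regime of our application) it fits, by a suitable placement, into a region where $f$ is positive. Set $W := \{z \in \RR^d : \min_i \norm{z - a_i} \leq 2\vartheta r + \epsilon\} \setminus \bigcup_i U_i$, so that $U_1,\dots,U_m,W$ are pairwise disjoint bounded Borel sets, and consider the event $E = \big(\bigcap_{i=1}^m \{\mathcal{P}_n(U_i) = 1\}\big) \cap \{\mathcal{P}_n(W) = 0\}$. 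By the independence of a Poisson process on disjoint sets,
\[
P(E) = \Bigg( \prod_{i=1}^m P\big(\mathcal{P}_n(U_i) = 1\big) \Bigg) \, P\big(\mathcal{P}_n(W) = 0\big) > 0,
\]
since each $\mathcal{P}_n(U_i)$ is Poisson with positive parameter $n \int_{U_i} f$, and $P(\mathcal{P}_n(W) = 0) = e^{-n \int_W f} > 0$. On the event $E$, the $m$ points falling in $\bigcup_i U_i$ form a $\vartheta$-persistent $k$-cycle by the robustness step; moreover they are $\vartheta r$-isolated, since any other point $y$ of $\mathcal{P}_n$ lies outside $W$ and outside $\bigcup_i U_i$, whence $\min_i \norm{y - a_i} > 2\vartheta r + \epsilon$ and therefore $\norm{y - x_j} > 2\vartheta r$ for each of the chosen points $x_j$, so there are no edges to the remainder of $\Cech_{\vartheta r}(\mathcal{P}_n)$. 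Thus with positive probability $\mathcal{P}_n$ contains a subset of $m$ vertices forming a minimal $\vartheta$-persistent isolated $k$-cycle, as claimed.
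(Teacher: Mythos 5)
Your proposal is correct and follows essentially the same route as the paper: start from a minimal configuration guaranteed by \cref{lem:thetaPersistentCyclesExist}, use stability of persistent homology to show the $\vartheta$-persistent cycle survives small perturbations, and conclude by the positive probability of the Poisson process placing one point in each of $m$ small balls. You are in fact somewhat more careful than the paper's own proof, which does not explicitly verify the \emph{isolation} condition or the placement at radius exactly $r$ — your buffer region $W$ together with the independence of $\mathcal{P}_n$ on disjoint Borel sets, and your initial rescaling, fill these in cleanly (your appeal to ``a region where $f$ is positive'' is no less rigorous than the paper's corresponding step, though for a general measurable density it would strictly require a Lebesgue density point argument).
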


\begin{proof}
Fixing $\vartheta$, we show that, with positive probability, there exists a minimal $\vartheta$-persistent isolated $k$-cycle.
By the construction in the proof of \cref{lem:thetaPersistentCyclesExist}, there exists a point set $Q=\{ y_1, y_2, \ldots, y_{m} \}$ and a radius $R > \vartheta r$ such that 
\[
\gamma \in Z_k(\Cech_r(Q) ) \setminus B_k(\Cech_{R}(Q) );
\]
equivalently, the induced homomorphism $H_k( \Cech_r(Q) \hookrightarrow \Cech_R(Q) )$ is nonzero, and hence $Q$ forms a $\vartheta$-persistent $k$-cycle in $\Cech_{r}(Q)$.

For each point $y_i$ in $Q$, consider a point $x_i$ contained in an open ball with radius $\delta$ centered at $y_i$, where $\delta$ is chosen such that $\frac{R-\delta}{r+\delta} = \vartheta; $
explicitly,
$ \delta =  \frac{R-\vartheta r}{\vartheta +1}. $
By stability of persistent homology \cite{stab}, perturbing the points by $\delta$ also changes the persistent homology by at most $\delta$.
Specifically, for each such choice $\{ x_1, \ldots, x_{m} \}=P$ we have 
\[\Cech_r(Q) \subseteq \Cech_{r+\delta}(P) \subseteq \Cech_{R-\delta}(P) \subseteq \Cech_R(Q).\]
Since the induced homomorphism $H_k( \Cech_r(Q) \hookrightarrow \Cech_R(Q) )$ is nonzero, so is the induced homomorphism $H_k( \Cech_{r+\delta}(P) \hookrightarrow \Cech_{R-\delta}(P) )$, implying that $P$ forms a $\vartheta$-persistent $k$-cycle in $\Cech_{r+\delta}(P)$.

The probability that $\mathcal{P}_n$ contains such a subset $P$ is positive since a ball of radius $\delta$ around $y_i$ has positive volume, proving the claim.
\end{proof}

\subsection{Proof of Theorem \ref{thm1}: Lower Bound }

We consider the finite geometric property
$$
\zeta_{r,p}^{\vartheta,k}(Y) = \left \{
\begin{array}{cl}
1 & \mbox{if $|Y|=p$ and $Y$ forms a $\vartheta$-persistent $k$-cycle in $\Cech_{r}(Y)$, } \\
0 & \mbox{otherwise,}
\end{array}
\right.
$$
as well as the finite geometric subset property
$$ \Upsilon_{r,p}^{\vartheta,k} (Y,X) = \sep_{\vartheta r}(Y,X) \cdot \zeta_{r,p}^{\vartheta,k}(Y).$$
If this property is satisfied, we say that \emph{$Y$ forms an isolated $\vartheta$-persistent $k$-cycle in $\Cech_{r}(X)$.}
As in \cref{ex:graphComponentCount}, the property $\zeta_{r,p}^{\vartheta,k}(Y)$ a.a.s.\@ implies $\sep_{r_n}(Y,\mathcal{X}_n)=1$.
Writing $\Upsilon = \Upsilon_{r,m}^{\vartheta,k}$, we consider the function
$$ \SubsetCount(\Upsilon,X) = \sum_{Y \subseteq X} \Upsilon(Y,X),  $$
which counts minimal $\vartheta$-persistent isolated $k$-cycles in $X$.

For $X=\mathcal{P}_n$, we obtain a random variable that bounds the $k$th persistent Betti number $\beta_k^{\vartheta}(\mathcal{P}_n,r)$ of $\Cech_{r}(\mathcal{P}_n) \hookrightarrow \Cech_{\vartheta r}(\mathcal{P}_n)$ from below:
\begin{lem} \label{lem:count}
$ \SubsetCount(\Upsilon_{r,m}^{\vartheta,k},\mathcal{P}_n) \leq \beta_k^{\vartheta}(\mathcal{P}_n,r).$
\end{lem}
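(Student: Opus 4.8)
The plan is to identify the persistent Betti number with the dimension of a quotient of cycle spaces, and then to exhibit exactly $\SubsetCount(\Upsilon_{r,m}^{\vartheta,k},\mathcal{P}_n)$ linearly independent classes in the image of the inclusion-induced map. Recall that the rank of $H_k(\Cech_r(\mathcal{P}_n)) \to H_k(\Cech_{\vartheta r}(\mathcal{P}_n))$ equals the dimension of its image. Since the map $Z_k(\Cech_r(\mathcal{P}_n)) \to H_k(\Cech_{\vartheta r}(\mathcal{P}_n))$ sending a cycle to its class has kernel $Z_k(\Cech_r(\mathcal{P}_n)) \cap B_k(\Cech_{\vartheta r}(\mathcal{P}_n))$ and image equal to that of the induced map, I would record at the outset that
\[
\beta_k^{\vartheta}(\mathcal{P}_n,r) = \dim \frac{Z_k(\Cech_r(\mathcal{P}_n))}{Z_k(\Cech_r(\mathcal{P}_n)) \cap B_k(\Cech_{\vartheta r}(\mathcal{P}_n))}.
\]
It then suffices to produce $s := \SubsetCount(\Upsilon_{r,m}^{\vartheta,k},\mathcal{P}_n)$ cycles in $Z_k(\Cech_r(\mathcal{P}_n))$ whose images in $H_k(\Cech_{\vartheta r}(\mathcal{P}_n))$ are linearly independent.

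Let $Y_1,\dots,Y_s$ be the subsets of $\mathcal{P}_n$ with $\Upsilon_{r,m}^{\vartheta,k}(Y_i,\mathcal{P}_n)=1$; each has $m$ vertices, forms a $\vartheta$-persistent $k$-cycle $\gamma_i \in Z_k(\Cech_r(Y_i))$, and is $\vartheta r$-isolated in $\mathcal{P}_n$. I would first argue that the $Y_i$ are pairwise disjoint connected components of $\Cech_{\vartheta r}(\mathcal{P}_n)$. By minimality of $m=m(\vartheta,k)$, every vertex of $Y_i$ must occur in a simplex of $\gamma_i$, for otherwise $\gamma_i$ would be a non-bounding cycle on fewer than $m$ vertices; likewise, if the support of $\gamma_i$ split into two parts on disjoint vertex sets, at least one part would be a non-bounding cycle on fewer vertices. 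Hence $Y_i = \operatorname{supp}(\gamma_i)$ is connected at scale $r$, and therefore at scale $\vartheta r$. Combined with $\vartheta r$-isolation, this forces each $Y_i$ to be a single connected component of $\Cech_{\vartheta r}(\mathcal{P}_n)$, so distinct $Y_i$ are disjoint.

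With disjointness in hand, linear independence follows from the decomposition of the chain complex over connected components: writing $\Cech_{\vartheta r}(\mathcal{P}_n)$ as the disjoint union of its components, the chains, boundaries, and homology all split as direct sums indexed by components, and each $\gamma_i$ is supported in its own component $Y_i$. As already noted after the definition of an isolated cycle, isolation guarantees $\gamma_i \notin B_k(\Cech_{\vartheta r}(\mathcal{P}_n))$, so $[\gamma_i]$ is nonzero and lies in the $Y_i$-summand of $H_k(\Cech_{\vartheta r}(\mathcal{P}_n))$. Concretely, suppose $\sum_i c_i \gamma_i = \partial\beta$ for some $(k+1)$-chain $\beta$; projecting $\beta$ onto the component $Y_i$ shows $c_i\gamma_i \in B_k(\Cech_{\vartheta r}(Y_i))$, and since $\gamma_i$ is non-bounding there and $\mathbb K$ is a field, $c_i = 0$. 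Thus the $[\gamma_i]$ are linearly independent, and as each $\gamma_i \in Z_k(\Cech_r(\mathcal{P}_n))$ they all lie in the image of the inclusion-induced map, giving $\beta_k^{\vartheta}(\mathcal{P}_n,r) \geq s$.

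The main obstacle is the disjointness/connectedness step: one must exploit the minimality encoded in $m=m(\vartheta,k)$ to see that each counted set equals the connected support of its cycle, since only then does isolation upgrade to the clean statement that the $Y_i$ are distinct connected components. This is precisely what makes the component-wise splitting of boundaries—and hence the linear independence argument—go through.
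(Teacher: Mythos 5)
Your proposal is correct and follows essentially the same route as the paper's proof: use minimality of $m(\vartheta,k)$ to show each counted set is connected at scale $r$ (by splitting a cycle over a disconnection and deriving a smaller $\vartheta$-persistent cycle), conclude from $\vartheta r$-isolation that the counted sets are pairwise disjoint components of $\Cech_{\vartheta r}(\mathcal{P}_n)$, and deduce linear independence of the resulting classes. You merely spell out in more detail the final linear-independence step (the component-wise projection of a bounding chain) that the paper leaves implicit.
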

\begin{proof}
We first show that the subsets $Y \subseteq \mathcal{P}_n$ having property $\Upsilon = \Upsilon_{r,m}^{\vartheta,k}$ must be pairwise disjoint. To see this, consider such a set~$Y$. Then $\Cech_r(Y)$ must be connected, as $Y$ is a minimal vertex set supporting a $\vartheta$-persistent $k$-cycle: assume that $\Cech_r(Y)$ is the disjoint union of $\Cech_r(Y_1)$ and $\Cech_r(Y_2)$. Then a $\vartheta$-persistent $k$-cycle $\gamma \in Z_k(\Cech_r(Y) )$ splits into $\gamma=\gamma_1+\gamma_2$ with $\gamma_i \in Z_k(\Cech_r(Y_i))$, $i=1,2$, at least one of them being nonbounding in $\Cech_{\vartheta r}(Y)$. By minimality of $Y$, we must have that one of the $Y_i$ is $Y$ and the other one is empty.
Now since the sets $Y$ with property $\Upsilon$ also form isolated connected components in $\Cech_{\vartheta r}(\mathcal{P}_n)$, they must be pairwise disjoint.

Consequently, the persistent cycles formed by these subsets generate linearly independent homology classes, which all contribute to $\beta_k^{\vartheta}(\mathcal{P}_n,r)$, and the claim follows.
\end{proof}

The asymptotic behavior of $E( \SubsetCount(\Upsilon,\mathcal{P}_n) )$ in terms of $n$ and $r_n$ follows directly by applying Lemma \ref{lem:prob} and Proposition \ref{prop:penrose2} to the finite geometric subset property $\Upsilon_{r,m}^{\vartheta,k}$:  
for $k \geq 1$, we have
\begin{equation*}
\lim_{n \rightarrow \infty} \frac{ E( \SubsetCount(\Upsilon,\mathcal{P}_n) )} { n^m  {r_n}^{d(m-1) } } = C,
\end{equation*}
where $C$ is a constant depending only on $k$ and $d$. 
Thus, we have 
\[E( \SubsetCount(\Upsilon,\mathcal{P}_n) ) \in \Theta( n^m  {r_n}^{d(m-1) }  ),\] from which the lower bound on $\beta_k^{\vartheta}(\mathcal{P}_n,r_n)$ follows with Lemma \ref{lem:count}:
\begin{cor}
\label{cor:lowerBound}
$ E(\beta_k^{\vartheta}(\mathcal{P}_n,r_n)) \in \Omega( n^m  {r_n}^{d(m-1) }  ). $
\end{cor}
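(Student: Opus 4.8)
The plan is to deduce the lower bound by taking expectations in the pointwise inequality of \cref{lem:count} and inserting the asymptotic order of the subset count. Concretely, \cref{lem:count} gives $\SubsetCount(\Upsilon_{r_n,m}^{\vartheta,k},\mathcal{P}_n) \leq \beta_k^{\vartheta}(\mathcal{P}_n,r_n)$ as an inequality of random variables, so by monotonicity of expectation
\[
E\bigl(\SubsetCount(\Upsilon_{r_n,m}^{\vartheta,k},\mathcal{P}_n)\bigr) \leq E\bigl(\beta_k^{\vartheta}(\mathcal{P}_n,r_n)\bigr).
\]
It therefore suffices to show that the left-hand side is of order $\Theta(n^m r_n^{d(m-1)})$, since $\Theta$ implies $\Omega$ and the bound then passes to $E(\beta_k^{\vartheta})$.

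For the order of the subset count I would apply \cref{prop:penrose2} to the finite geometric subset property $\Upsilon_{r,m}^{\vartheta,k} = \sep_{\vartheta r}\cdot \zeta_{r,m}^{\vartheta,k}$. Its two hypotheses are exactly the statements already assembled above: the factor $\zeta_{r,m}^{\vartheta,k}$ occurs with positive probability for all sufficiently small $r$ by \cref{lem:prob}, and, as in \cref{ex:graphComponentCount}, the separation factor $\sep_{\vartheta r_n}$ equals $1$ asymptotically almost surely whenever $\zeta_{r_n,m}^{\vartheta,k}=1$, because in the subcritical regime the expected number of further points of $\mathcal{P}_n$ within distance $2\vartheta r_n$ of the cycle is of order $n r_n^d \to 0$. \cref{prop:penrose2} then yields
\[
\lim_{n\to\infty} \frac{E\bigl(\SubsetCount(\Upsilon_{r_n,m}^{\vartheta,k},\mathcal{P}_n)\bigr)}{n\,(r_n^d n)^{m-1}} = \mu_{\zeta_{1,m}^{\vartheta,k}},
\]
and the positive-probability hypothesis makes the limiting constant $\mu_{\zeta_{1,m}^{\vartheta,k}}$ strictly positive, giving $E(\SubsetCount) \in \Theta(n^m r_n^{d(m-1)})$ after noting $n(r_n^d n)^{m-1} = n^m r_n^{d(m-1)}$.

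The corollary itself carries no real difficulty; all the substance sits in the lemmas it invokes. If anything were to require care, it is confirming the strict positivity of $\mu_{\zeta_{1,m}^{\vartheta,k}}$, so that we obtain a genuine $\Omega$ rather than a vacuous $\mathcal{O}$ bound, and the a.a.s.\ separation claim; both are handled by \cref{lem:prob} and the subcritical volume estimate, respectively.
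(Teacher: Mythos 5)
Your proposal is correct and follows essentially the same route as the paper: take expectations in \cref{lem:count}, then apply \cref{prop:penrose2} to $\Upsilon_{r,m}^{\vartheta,k}$, with \cref{lem:prob} supplying the positive-probability hypothesis and the subcritical volume estimate (as in \cref{ex:graphComponentCount}) supplying the a.a.s.\ separation condition. Your added emphasis on the strict positivity of the limiting constant $\mu_{\zeta_{1,m}^{\vartheta,k}}$ is a correct and slightly more explicit rendering of what the paper leaves implicit.
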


\section{Upper Bound}
\label{sec4}

Finally, in this section we derive an asymptotic upper bound for $E(\beta_k^{\vartheta}(\mathcal{P}_n,r_n))$ by another application of the methods introduced in Section \ref{sec2}. 

\begin{lem}
\label{lem:upperBound}
$ E(\beta_k^{\vartheta}(\mathcal{P}_n,r_n)) \in \mathcal O( n^m  {r_n}^{d(m-1) }  ).
$
\end{lem}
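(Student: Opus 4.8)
The plan is to exploit that in the subcritical regime $\Cech_{\vartheta r_n}(\mathcal P_n)$ splits into small connected components, over which persistent homology is additive, and then to count the components weighted by a crude bound on the Betti number they can carry. Write $C_1,\dots,C_\ell$ for the connected components of $\Cech_{\vartheta r_n}(\mathcal P_n)$, with vertex sets $P_1,\dots,P_\ell$. Since $\vartheta \ge 1$ forces $\Cech_{r_n}(\mathcal P_n) \subseteq \Cech_{\vartheta r_n}(\mathcal P_n)$, every simplex of $\Cech_{r_n}(\mathcal P_n)$ has all of its vertices in a single $P_j$, so $\Cech_{r_n}(\mathcal P_n) = \bigsqcup_j \Cech_{r_n}(P_j)$, and the inclusion respects this splitting into components. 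Hence the induced map on $H_k$ is block diagonal and
\[
\beta_k^{\vartheta}(\mathcal P_n, r_n) = \sum_{j=1}^{\ell} \rank\bigl(H_k(\Cech_{r_n}(P_j)) \to H_k(\Cech_{\vartheta r_n}(P_j))\bigr).
\]
A summand can be nonzero only if $P_j$ forms a $\vartheta$-persistent $k$-cycle, which by definition of $m=m(\vartheta,k)$ requires $|P_j| \ge m$; moreover each summand is at most $\dim H_k(\Cech_{r_n}(P_j)) \le \binom{|P_j|}{k+1}$.

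Next I would pass to a sum over component sizes. Letting $N_p$ be the number of components of $\Cech_{\vartheta r_n}(\mathcal P_n)$ with exactly $p$ vertices, that is $N_p = \SubsetCount(\comp_{\vartheta r_n,p},\mathcal P_n)$ for the isolated-component property $\comp$ of the earlier example, the previous display gives $\beta_k^{\vartheta}(\mathcal P_n, r_n) \le \sum_{p\ge m}\binom{p}{k+1} N_p$, and hence $E(\beta_k^{\vartheta}(\mathcal P_n, r_n)) \le \sum_{p\ge m}\binom{p}{k+1} E(N_p)$ by monotone convergence. To estimate $E(N_p)$ I would apply Palm theory (\cref{thm:palm}) to $\comp_{\vartheta r_n,p} = \sep_{\vartheta r_n}\cdot \conn_{\vartheta r_n,p}$ and discard the separation factor, obtaining
\[
E(N_p) = \frac{n^p}{p!}\,E\bigl(\comp_{\vartheta r_n,p}(\mathcal X_p, \mathcal X_p \cup \mathcal P_n)\bigr) \le \frac{n^p}{p!}\,P\bigl(\mathcal X_p \text{ is connected at scale } \vartheta r_n\bigr).
\]
A connected graph on $p$ vertices contains a spanning tree, so by Cayley's formula the connectivity probability is at most $p^{p-2}$ times the probability that one fixed spanning tree is realized; integrating the child variables of a rooted tree one edge at a time against $\int_{\Ball_{\vartheta r_n}(x)} f \le \|f\|_\infty \vol(\Ball_{\vartheta r_n}) = \kappa\, r_n^d$, with $\kappa := \|f\|_\infty \vol(\Ball_1)\,\vartheta^d$, yields $P(\mathcal X_p \text{ connected}) \le p^{p-2}\kappa^{p-1} r_n^{d(p-1)}$, and therefore
\[
E(N_p) \le \frac{n^p}{p!}\,p^{p-2}\kappa^{p-1} r_n^{d(p-1)} = \frac{n\,p^{p-2}}{p!}\,\bigl(\kappa\, r_n^d n\bigr)^{p-1}.
\]

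Finally I would assemble the estimate and verify that the subcritical hypothesis makes the $p=m$ term dominate. Writing $\alpha_n := r_n^d n$ and substituting the bound on $E(N_p)$ gives
\[
E(\beta_k^{\vartheta}(\mathcal P_n, r_n)) \le n\,\alpha_n^{\,m-1}\sum_{p\ge m}\binom{p}{k+1}\frac{p^{p-2}}{p!}\,\kappa^{p-1}\,\alpha_n^{\,p-m}.
\]
Using $p^{p-2}/p! \le e^p$ (from Stirling) the coefficients are bounded by $p^{k+1}(e\kappa)^{p}$, so the power series $\sum_{p\ge m}\binom{p}{k+1}\tfrac{p^{p-2}}{p!}\kappa^{p-1} x^{p-m}$ has positive radius of convergence $1/(e\kappa)$. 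Since $\alpha_n \to 0$ in the subcritical regime, for all large $n$ we have $\alpha_n < 1/(e\kappa)$, the sum converges, and it tends to its leading coefficient as $\alpha_n \to 0$; in particular it stays bounded. Thus $E(\beta_k^{\vartheta}(\mathcal P_n, r_n)) \in \mathcal O(n\,\alpha_n^{\,m-1}) = \mathcal O(n^m r_n^{d(m-1)})$, as claimed. The one delicate point, and the main obstacle, is exactly this tail control: the pointwise asymptotics of \cref{prop:penrose2} only handle each fixed $p$, so it is the uniform combinatorial bound on $E(N_p)$ via the spanning-tree estimate, together with the finite radius of convergence, that guarantees the infinitely many larger components contribute only lower-order terms.
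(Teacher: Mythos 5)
Your proof is correct, and the overall strategy---decompose $\beta_k^\vartheta$ over the components of $\Cech_{\vartheta r_n}(\mathcal P_n)$, observe that only components on $m$ or more vertices can contribute, and weight the count of size-$p$ components by a crude combinatorial bound---is the same as the paper's. The difference is in how the two arguments handle the one genuinely delicate step, and here your version is the more complete one. The paper bounds $\beta_k^{\vartheta}(P,r) \leq \sum_{p \geq m} \SubsetCount(\Upsilon_{r,p}^{\vartheta,k},P)\cdot C^\vartheta_{p,k}$ and then asserts, citing Proposition~\ref{prop:penrose2}, that the $p=m$ summand dominates; but that proposition only gives the asymptotics of each summand for \emph{fixed} $p$, so a uniform-in-$p$ estimate is needed to control the infinite tail, and the paper does not spell one out. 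Your spanning-tree bound $E(N_p) \leq \tfrac{n}{p!}\,p^{p-2}(\kappa\, r_n^d n)^{p-1}$, together with the observation that the resulting power series in $\alpha_n = r_n^d n$ has positive radius of convergence and hence stays bounded once $\alpha_n$ is small, is exactly the missing ingredient (and is essentially Penrose's own treatment of component counts). Two cosmetic points: connectivity of $\Cech_{\vartheta r_n}(\mathcal X_p)$ corresponds to spanning-tree edges of length at most $2\vartheta r_n$, so your constant $\kappa$ should carry a factor $(2\vartheta)^d$ rather than $\vartheta^d$---irrelevant to the conclusion; and your per-component bound $\binom{p}{k+1}$ simply plays the role of the paper's constant $C^\vartheta_{p,k}$.
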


\begin{proof}
Let $\vartheta \geq 1$, $d \geq 2$ and $1 \leq k \leq d-1$.

Any contribution to $\beta_k^{\vartheta}(\mathcal{P}_n,r_n)$ necessarily comes from a $k$-cycle supported on a set of $m$ or more vertices. 
Let $c_{q,k}^\vartheta$ be the maximal $\vartheta$-persistent Betti number $\beta_k^{\vartheta}(P_q,r)$ of any point set $P_q$ of $q$ vertices and any $r \geq 0$.
Note that 
\[\beta_k^{\vartheta}(P_q,r) \leq c_{q,k}^\vartheta \leq \dim C_k(\Cech_r(P_q)) \leq {q \choose k+1 }.\] 
Since any $\vartheta$-persistent $k$-cycle requires at least $m$ vertices, a point set $P_p$ of $p \geq m$ vertices can have Betti number at most
\begin{equation} \label{constant}
\beta_k^{\vartheta}(P_p,r)\leq C^\vartheta_{p,k} := \sum_{i=0}^{p-m} {p \choose m+i } c_{m+i,k}^\vartheta
\end{equation}
$k$-cycles supported on $m$ or more vertices.

Using the geometric subset property $\Upsilon_{r,p}^{\vartheta,k}$ defined before, we obtain a bound on the persistent Betti number for finite points sets $P$ with arbitrary cardinality:

\[\beta_k^{\vartheta}(P,r) \leq \sum_{p \geq m} \SubsetCount(\Upsilon_{r,p}^{\vartheta,k},P)  \cdot C^\vartheta_{p,k}.\]

Using Penrose's method, we can estimate the asymptotic order of the number of subsets of $\mathcal{P}_n$ with nontrivial $\vartheta$-persistent homology for ${r_n}^d n \rightarrow 0$ as $n \rightarrow \infty$.
Applying \cref{prop:penrose2}, the summand for $p=m$ is dominant, and  we obtain

$$E\left(\sum_{p \geq m} \SubsetCount(\Upsilon_{{r_n},p}^{\vartheta,k},\mathcal{P}_n) \right)
 \in \mathcal{O}\left( n^m {r_n}^{d(m-1)} \right).$$ 
The claim now follows.
\end{proof}

\begin{proof}[Proof of \cref{thm1}]
Combining \cref{cor:lowerBound,lem:upperBound}, we obtain
\[ E(\beta_k^{\vartheta}(\mathcal{P}_n,r_n)) \in \Theta( n^m  {r_n}^{d(m-1) }  )
\]
as claimed.
\end{proof}

\section{Concluding remarks}
\label{sec5}

We conclude with three final remarks. Note that we only determine the asymptotic order of the $k$th $\vartheta$-persistent Betti number of a random \v Cech complex. Getting the same leading terms in the upper and lower bound requires an answer to the following question:
\emph{How many linearly independent $\vartheta$-persistent $k$ cycles can be supported on a set of $m(\vartheta,k)$ vertices?}
We conjecture that the answer is 1, i.e., $c_{m,k}^{\vartheta}=1$. 

A second interesting problem for future investigation is the study of persistent Betti numbers in the \emph{critical} case; i.e., for ${r_n}^d n \rightarrow c$, where $c >0$ is a constant.

Finally, we remark that our arguments can be adjusted to apply to homology with other coefficients, in particular, integer coefficients. An analogous result requires to adjust the definitions accordingly, in particular the definitions of $\vartheta$-persistent cycles, persistent Betti numbers, and the number $m(\vartheta,k)$. 

\subsection*{Acknowledgements}
This research has been supported by the DFG Collaborative Research Center SFB/TRR 109 ``Discretization in Geometry and Dynamics''.
We thank Matthew Kahle for helpful discussions on the upper bound for the persistent Betti number. 

\bibliographystyle{abbrv}

\bibliography{PersBetti}

\todos

\end{document}